\newcommand{\A}{\ensuremath{{\mathbb{A}}}}
\newcommand{\Z}{\ensuremath{{\mathbb{Z}}}\xspace}
\renewcommand{\P}{\ensuremath{{\mathbb{P}}}}
\newcommand{\ra}{\rightarrow}
\newcommand{\lra}{\longrightarrow}
\newcommand\Det{\operatorname{Det}}
\newcommand\Hom{\operatorname{Hom}}
\newcommand\im{\operatorname{im}}
\newcommand\Sym{\operatorname{Sym}}
\newcommand\tensor{\otimes}
\newcommand\isom{\cong}
\newcommand\sub{\subset}
\newcommand\tesnor{\otimes}
\newcommand\wn{\wedge^n}
\newcommand\GL{\operatorname{GL}}
\newcommand\Spec{\operatorname{Spec}}
\newcommand\Proj{\operatorname{Proj}}
\newcommand\wt{\wedge^2}
\newcommand\GZ{\ensuremath{\GL_2(\Z)}\xspace}
\newcommand\Sf{\ensuremath{{S_f}}\xspace}
\newcommand\OSf{\ensuremath{\mathcal{O}_\Sf}\xspace}
\newcommand\BS{\ensuremath{S}\xspace}
\newcommand\OS{\ensuremath{{\mathcal{O}_\BS}}\xspace}
\newcommand\m{\ensuremath{\mathfrak{m}}\xspace}
\newcommand\map[4]{\ensuremath{\begin{array}{ccc}#1&\lra&#2\\#3&\mapsto&#4\end{array}}}
\newcommand\bq{\begin{equation}}
\newcommand\eq{\end{equation}}
\newtheorem{proposition}{Proposition}[section]
\newtheorem{theorem}[proposition]{Theorem}
\newtheorem{corollary}[proposition]{Corollary}
\newtheorem{example}[proposition]{Example}
\newtheorem{lemma}[proposition]{Lemma}
\theoremstyle{remark}
\newtheorem{remark}[proposition]{Remark}
\newcommand{\U}{\mathcal{U}}
\renewcommand{\v}{\mathcal{V}}
\newcommand{\arisom}{\stackrel{\sim}{\ra}}
\newenvironment{definition}{\vspace{2 ex}{\noindent{\bf Definition. }}}{\vspace{2 ex}}
\newenvironment{notation}{\vspace{2 ex}{\noindent{\bf Notation. }}}{\vspace{2 ex}}
\newcommand\GGZ{\ensuremath{\GL_2(\Z)\times \GL_3(\Z)}\xspace}
\title{Parametrizing quartic algebras over an arbitrary base}
\author{Melanie Matchett Wood\thanks{mwood@math.stanford.edu}\\
Stanford University\\
Department of Mathematics\\
Building 380, Sloan Hall\\
Stanford, California 94305\\
USA}
\begin{document}

\maketitle

\abstract{We parametrize quartic commutative algebras over any base ring or scheme (equivalently finite, flat degree four
$S$-schemes), with their cubic resolvents, by pairs of ternary quadratic forms over the base.  This generalizes Bhargava's parametrization of quartic rings with their cubic resolvent rings over $\Z$ by pairs of integral ternary quadratic forms, as well as Casnati and Ekedahl's construction of Gorenstein quartic covers by certain rank 2 families of ternary quadratic forms.  We give a geometric construction of a quartic algebra from any pair of ternary quadratic forms, and prove this construction commutes with base change and also agrees with Bhargava's explicit construction over $\Z$.}

\section{Introduction}\label{intro}

\subsection{Definitions and main result}
A \emph{$n$-ic algebra} $Q$ over a scheme $S$  is an $\OS$-algebra $Q$ that is a locally free rank $n$ $\OS$-module, or equivalently $\Spec Q$ is a finite, flat degree $n$ $S$-scheme.
For $n=3,4$, we call such algebras \emph{cubic} and \emph{quartic} respectively.
Given a quartic algebra, we can define a cubic resolvent which is a model over $S$ of the classical cubic resolvent field of a quartic field.
For a quartic algebra $Q$ over $S$, \emph{a cubic resolvent}
$C$ of $Q$ is a cubic algebra $C$ over $S$, with a quadratic map $\phi : Q/\OS \ra C/\OS$ and an isomorphism $\delta : \wedge^4 Q \arisom \wedge^3 C$,
such that for any sections $x,y$ of $Q$ we have $\delta(1\wedge x\wedge y\wedge xy)=1 \wedge \phi(x)\wedge \phi(y) $ and also $R$ is the cubic algebra corresponding to $\Det (\phi)$ (see Section~\ref{S:resolvent} for more details).
 An isomorphism of a pair $(Q,C)$ is given by isomorphisms of the respective algebras that respect $\phi$ and $\delta$.

A \emph{double ternary quadratic form} over $S$ is a locally free rank 3 $\OS$-module
$W$, a locally free rank 2 $\OS$-module $U$, and a global section $p\in \Sym^2 W \tesnor U$, and an isomorphism $\wedge^3 W \tensor \wedge^2 U \stackrel{\sim}{\ra} \OS$ which is called an \emph{orientation}.
An isomorphism of double ternary quadratic forms $(W,U,p)$ and $(W',U',p')$ is given by isomorphisms $W\arisom W'$ and $U\arisom U'$ that
send $p$ to $p'$ and respect the orientations.

The main theorem of this paper is the following.
\begin{theorem}\label{T:genquartic}
There is an isomorphism between the moduli stack for quartic algebras with cubic resolvents and the moduli stack for double ternary quadratic forms.
In other words, for a scheme $S$ there is an equivalence between the category of quartic algebras with cubic resolvents and the category of
double ternary quadratic forms (with morphisms given by isomorphisms in both categories), and this natural equivalence commutes with base change in $S$.
\end{theorem}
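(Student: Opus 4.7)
The plan is to construct functors in both directions and verify that they are mutually quasi-inverse and natural in $S$. The forward direction $F\colon(Q,C)\mapsto(W,U,p)$ reads almost directly off the definitions. Set $W := (Q/\OS)^*$ and $U := C/\OS$, which are locally free of ranks $3$ and $2$ respectively. The linearization of the quadratic map $\phi \colon Q/\OS \to C/\OS$ is a morphism $\Sym^2(Q/\OS) \to C/\OS$, which by duality is the same data as a section $p \in \Sym^2 W \tensor U$. From the canonical isomorphisms $\wedge^4 Q \cong \wedge^3(Q/\OS)$ and $\wedge^3 C \cong \wedge^2(C/\OS)$, together with $\delta$, one gets $\wedge^3 W \tensor \wedge^2 U \arisom \OS$, the required orientation. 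This construction is manifestly functorial and compatible with base change.

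For the reverse functor $G\colon(W,U,p)\mapsto(Q,C)$, I would combine a conceptual geometric construction with a local algebraic one. Geometrically, $p$ defines a pencil of quadrics in $\P(W^*)$ parametrized by $\P(U^*)$; one then takes the base locus $Y \sub \P(W^*)$ and the discriminant locus $D \sub \P(U^*)$, and pushes forward $\mathcal{O}_Y$ and $\mathcal{O}_D$ to $S$ to obtain the candidate quartic algebra $Q$ and cubic resolvent $C$. Because this picture may degenerate (the base locus need not be visibly finite flat in the non-Gorenstein case treated by Casnati--Ekedahl), I would corroborate it locally: on a Zariski chart where $W$ and $U$ are free, $p$ becomes a pair of ternary quadratic forms $(A,B)$, and I would write down explicit formulas — modeled on Bhargava's over $\Z$ — for multiplications on $\OS \oplus W^*$ and $\OS \oplus U^*$. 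Showing that these formulas are $\GL(W)\times\GL(U)$-equivariant, with precisely the determinantal twists imposed by the orientation $\wedge^3 W \tensor \wedge^2 U \cong \OS$, guarantees that they glue into a global construction independent of the chart.

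The main obstacle is verifying that $G$ actually lands in the stated target category: one must check commutativity and associativity of the constructed multiplications, the resolvent identity $\delta(1 \wedge x \wedge y \wedge xy) = 1 \wedge \phi(x) \wedge \phi(y)$, and the condition $C \cong \Det(\phi)$. I would tackle these by reducing to a local trivialization, verifying the identities as polynomial identities in the coefficients of $(A,B)$ in the style of Bhargava, and then globalizing via equivariance. Once $G$ is in hand, the isomorphisms $F \circ G \cong \Id$ and $G \circ F \cong \Id$ follow by tracing through the constructions in local trivializations, which simultaneously establishes agreement with Bhargava's parametrization over $\Z$; base-change compatibility is automatic from the functorial character of every ingredient used (finite flat pushforward, symmetric and exterior powers, duals, and linearization of quadratic maps).
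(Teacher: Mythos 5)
Your overall architecture matches the paper's: the forward functor $(Q,C)\mapsto (W,U,p)$ with $W=(Q/\OS)^*$, $U=C/\OS$ is exactly the paper's, the reverse functor is built from a geometric construction corroborated by Bhargava-style multiplication tables, and one finishes by checking the two composites. But there is a genuine gap at the step you dismiss as ``tracing through the constructions in local trivializations,'' namely the composite $G\circ F\cong\Id$. Starting from a pair $(Q,C)$, extracting $\phi$, and rebuilding a quartic algebra $Q_\phi$ from $\phi$, you must show that $Q_\phi$ has the \emph{same multiplication} as $Q$; a priori nothing rules out two distinct quartic algebra structures sharing the same resolvent datum $(\phi,\delta,C)$. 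The paper's proof supplies the missing idea: after normalizing the lifts $q_i$ of a basis of $Q/\OS$ (so that $q_1q_2$ has no $q_1$ or $q_2$ term and $q_1q_3$ has no $q_1$ term), the resolvent identity $\delta(1\wedge x\wedge y\wedge xy)=1\wedge\phi(x)\wedge\phi(y)$, evaluated on enough pairs such as $x=q_i,\ y=q_j$ and $x=q_i+q_k,\ y=q_j$, \emph{determines} every structure constant $m_{ij}^k$ as an explicit polynomial $\lambda^{\ell_1\ell_2}_{\ell_3\ell_4}=a_{\ell_1\ell_2}b_{\ell_3\ell_4}-b_{\ell_1\ell_2}a_{\ell_3\ell_4}$ in the coefficients of $\phi$ (with the constant terms then forced by associativity). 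This uniqueness statement is the heart of the injectivity of the correspondence, and your proposal does not contain it.

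Two secondary points. First, pushing forward $\mathcal{O}_Y$ for the base locus $Y$ of the pencil does not yield a rank-$4$ algebra in degenerate cases (e.g.\ $p\equiv 0$ gives rank $1$); the paper replaces it by $H^0R\pi_*$ of the Koszul complex $\mathcal{K}_p$, which is what guarantees a quartic algebra for \emph{every} form and makes base-change compatibility provable via flatness of all $H^iR\pi_*(\mathcal{K}_p)$. Your fallback of gluing Bhargava's tables by $\GL(W)\times\GL(U)$-equivariance is in principle viable, but that equivariance is itself a nontrivial fact: the paper does not verify it by direct computation with the polynomial formulas, but deduces it from the coordinate-free hypercohomology construction (Theorem~\ref{SameOnUniversal}), so you would be taking on a substantial unperformed verification. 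Second, to make ``the composites are the identity'' literal rather than ``isomorphic via some unspecified isomorphism,'' the paper rigidifies to \emph{based} forms and \emph{based} pairs; you will want some such device to avoid chasing automorphisms.
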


The moduli stack of double ternary quadratic forms is simply $[\A^{12}/\Gamma]$, where $\Gamma$ is the sub-group scheme of $\GL_2\times\GL_3$ of elements
$(g,h)$ such that $\det(g)\det(h)=1$.
In particular, we have a parametrization of quartic algebras with cubic resolvents.
\begin{corollary}
  Over a scheme $S$, there is a bijection between isomorphism classes of
double ternary quadratic forms over $S$ and isomorphism classes of pairs $(Q,C)$ where $Q$ is a quartic algebra over $S$ and 
$C$ is a cubic resolvent of $Q$. 
\end{corollary}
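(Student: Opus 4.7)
The plan is to deduce this corollary essentially formally from Theorem \ref{T:genquartic}. The theorem asserts an equivalence of categories between quartic algebras with cubic resolvents over $S$ and double ternary quadratic forms over $S$, with morphisms on each side being isomorphisms. So the corollary is just the decategorification of the theorem: an equivalence of categories induces a bijection on sets of isomorphism classes of objects.

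Concretely, I would apply Theorem \ref{T:genquartic} to the scheme $S$ to obtain an equivalence $F$ from one category to the other together with a quasi-inverse $G$. A functor sends isomorphic objects to isomorphic objects, so $F$ and $G$ descend to well-defined maps on isomorphism classes. The natural isomorphisms $GF \cong \mathrm{id}$ and $FG \cong \mathrm{id}$ say that for any object $X$ one has $GF(X) \cong X$, hence the induced maps on isomorphism classes are mutually inverse, yielding the desired bijection.

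The only content beyond this formal observation is verifying that the notions of isomorphism in the corollary match those in the theorem: an isomorphism of pairs $(Q,C)$ is, by the definition given earlier in the paper, a pair of algebra isomorphisms compatible with $\phi$ and $\delta$, and an isomorphism of double ternary quadratic forms $(W,U,p)$ is one respecting the orientation. Both of these are exactly what appears in the categories of Theorem \ref{T:genquartic}, so no reconciliation is required.

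There is no real obstacle here; the substantive work is entirely contained in Theorem \ref{T:genquartic}, and this corollary is simply what one obtains by forgetting morphisms and retaining only isomorphism classes of objects on both sides of that equivalence.
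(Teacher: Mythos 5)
Your proof is correct and matches the paper's (implicit) reasoning: the paper presents this corollary as an immediate formal consequence of Theorem~\ref{T:genquartic}, exactly the decategorification you describe. No further comparison is needed.
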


 \begin{remark}
The geometric language of this paper makes it more natural to work over a scheme $S$,
but all of our work includes the case $S=\Spec R$, in which case we are simply working over a ring $R$.
The reader mainly interested in a base ring can replace $\OS$ with $R$
and ``global section'' with ``element''
throughout the paper.   
\end{remark}

\subsection{Background and previous work}
It has been known since the work of Delone and Faddeev \cite{DF} (see also Section~\ref{S:cubic} of this paper, \cite{DH}, \cite{GGS}, and \cite{HCL3}) that cubic rings are parametrized by
binary cubic forms.  A \emph{cubic ring} is a ring whose additive structure is a free rank 3 \Z-module, and a
\emph{binary cubic form} is a polynomial $f=ax^3+bx^2y+cxy^2+dy^3$ with $a,b,c,d\in \Z$.  
Cubic rings, up to isomorphism, are in natural discriminant-preserving bijection with \GZ-classes of binary cubic forms.  If we prefer to think geometrically, a cubic ring is a finite, flat degree three cover of $\Spec \Z$.  
A parametrization analogous to that of Delone and Faddeev \cite{DF} was proven by Miranda \cite{Miranda} for finite, flat degree three
covers of an irreducible scheme over an algebraically closed field of characteristic not 2 or 3.  
Though these correspondences were originally given by writing down a multiplication table for the cubic ring
(or sheaf of functions on the cubic cover), when $f$ is a non-zero integral binary cubic form, the associated cubic ring 
is simply the ring of global functions of the subscheme of $\P^1_\Z$ cut out by $f$ (see \cite{Delcubic}, \cite[Theorem 2.4]{binarynic}, \cite{CasI}).

In this paper, we study quartic (commutative) algebras, or equivalently, finite, flat degree four covers of a base scheme.  Casnati and Ekedahl \cite{CasI}
found that finite, flat degree four Gorenstein covers of an integral base scheme are given by global sections of certain double ternary quadratic forms, with a codimension condition on the section at every point of the base.  
Recently, quartic algebras over $\Z$ have been parametrized
by Bhargava \cite{HCL3}. More precisely, Bhargava proved that
isomorphism classes of
pairs $(Q,C)$, where $Q$ is a quartic ring (i.e. isomorphic to $\Z^4$ as a $\Z$-module)  and $C$ is a cubic resolvent of $Q$,
are in natural bijection with $\GGZ$-classes of pairs of integral ternary quadratic forms.  
(We could view a pair of ternary quadratic forms over \Z
as a double ternary quadratic form 
$\sum_{1\leq i\leq j\leq3} a_{ij} x_i x_jy$+$\sum_{1\leq i\leq j\leq3} b_{ij} x_i x_jz$.)
In \cite{HCL3}, Bhargava introduced cubic resolvents as models
of the classical cubic resolvent field of a quartic field. 
All quartic rings over \Z have at least one cubic resolvent, and 
many quartic rings (for example, maximal quartic rings over \Z) have a unique cubic resolvent \cite[Corollary 4]{HCL3}.  
This has allowed Bhargava \cite{countquartic} 
to count asymptotically the number of $S_4$ number fields of discriminant less than $X$ 
(as well as the number of orders in $S_4$ number fields). 
Casnati \cite{CasIII} has also given a construction of a finite, flat degree three ``discriminant cover'' corresponding to a finite, flat degree four Gorenstein cover of an integral scheme over an algebraically closed field of characteristic not equal to 2, but since he was only considering quartic covers that turn out to have unique cubic resolvents, the importance of the cubic resolvent to the moduli problem was not apparent.
 Bhargava \cite{HCL3} realized that to obtain a nice parametrization of quartic rings over $\Z$, one must parametrize them along with
their cubic resolvents.

In this paper, we generalize Bhargava's work \cite{HCL3} from $\Z$ to an arbitrary scheme, and Casnati and Ekedahl's work \cite{CasI} from the case of Gorenstein covers and special forms to all quartic covers and forms, as well as to an arbitrary base scheme.  Moreover, we prove our correspondence between quartic algebras with resolvents and double ternary quadratic forms commutes with base change.

Bhargava, in \cite{HCL3}, describes the relationship between quartic rings with cubic resolvents 
and  pairs of ternary quadratic forms by giving the multiplication tables
for the quartic and cubic rings explicitly in terms of the coefficients of the forms.
In this paper, we give a geometric, coordinate-free description of a quartic ring $Q$ given by a pair of integral ternary quadratic forms.
For the nicest forms, the pair of ternary quadratic forms gives a pencil of conics in $\P^2_\Z$ and the quartic
ring is given by the global functions of the degree four subscheme cut out by the pencil.  
In Section~\ref{S:geom}, we give a global, geometric, coordinate-free construction over a quartic algebra from a double ternary quadratic form over any scheme $S$.
The construction that works for all forms is taking the degree $0$ hypercohomology of the Koszul complex of the double ternary quadratic form.  This
agrees with the intuitive geometric description given above for nice cases, but unlike the above description, always gives a quartic algebra.
Casnati and Ekedahl \cite{CasI} have given an analogous gemetric construction over an arbitrary scheme in the case when the quartic algebra is Gorenstein.  
Deligne, in a letter \cite{Delquartic} to Bhargava, gives an analogous geometric construction when the generic conic 
in the pencil is non-singular over each geometric point, and proves that it extends (without giving a geometric construction in the extended case)
to all pairs of ternary quadratic forms.
The geometric construction in this paper works for all double ternary quadratic forms, for example
when the form is idenitcally 0 in some fiber, when the conics given by the ternary quadratic forms share a component,
or even when both forms are identically 0!  

In Section~\ref{S:local}, we explain how the quartic algebra associated to a double ternary quadratic form over $S$ 
can be defined locally in terms of the multiplication tables given in Bhargava's work \cite{HCL3}, and prove that these
constructions agree.  The calculations showing this agreement are not straightforward and are given in Section~\ref{SameOnUniversal}.

In Section~\ref{S:cubic}, we review the parametrization of cubic algebras.
This is not only motivation for our study of quartic algebras, but also is important background for the results
in this paper because the cubic resolvent $C$ is a cubic algebra.  In
 Section~\ref{S:resolvent} we give the definition of a cubic resolvent in more detail.
 In Section~\ref{S:constcubic}, we give the construction of a cubic resolvent 
  from a double ternary quadratic form.
In Section~\ref{S:Main}, we prove Theorem~\ref{T:genquartic}.

\begin{notation}
 If $\mathcal{F}$ is a sheaf, we use $s\in\mathcal{F}$ to denote that $s$ is a global section of
$\mathcal{F}$.   If $V$ is a locally free $\OS$-module, we use $V^*$ to denote
the $\OS$-module $\mathcal{H}om_\OS (V, \OS)$.  We use $\Sym^n V$ to denote the usual
quotient of $V^{\tensor n}$, and $\Sym_n V$ to denote the submodule of symmetric elements of $V^{\tensor n}$.
Note that when $V$ is locally free we have $\Sym_n V=(\Sym^n V^*)^*$ (see Lemma~\ref{dualsyms}). We define $\P(V)=\Proj \Sym^* V$.

 Normally, in the language of algebra,
 one says that an $R$-module $M$ is locally free of rank $n$
if for all prime ideals $\wp$ of $R$, the localization $M_\wp$ is free of rank $n$.
However, if we have a scheme $S$ and an $\OS$-module $M$, we normally
say that $M$ is locally free of rank $n$ if on some open cover of $S$ it is free of rank $n$;
in the algebraic language this is equivalent to saying that for every prime ideal $\wp$ of $R$, 
there is an $f\in R \setminus \wp$ such that the)
 localization $M_f$ is free of rank $n$.  
In this paper we shall use the geometric sense of the term \emph{locally free of rank $n$}.
The geometric condition of locally free of rank $n$ is equivalent to being finitely 
generated and having the algebraic condition of locally free of rank $n$. 
\end{notation}

\section{The parametrization of cubic algebras}\label{S:cubic}
In this section, we review the parametrization of cubic algebras.  
A \emph{binary cubic form} over a scheme $S$ is a locally free rank 2 $\OS$-module
$V$ and an  $f\in\Sym^3 V \tensor  \wedge^2 V^*$.  An isomorphism of binary cubic forms $(V,f)\isom(V,f')$
is given by an isomorphism $V\isom V'$ that takes $f$ to $f'$.  
(Normally, we would call these \emph{twisted binary cubic forms} but since they are 
the only binary cubic forms in this paper, we will use the shorter name for simplicity.)
Of course, if $V$ is
the free rank 2 $\OS$-module $\OS x \oplus \OS y$, then the binary cubic
forms $f\in\Sym^3 V \tensor  \wedge^2 V^*$ are just polynomials $(ax^3+bx^2y+cxy^2+dy^3)\tensor(x\wedge y)^*$, where 
$a,b,c,d\in\OS$.

Over an arbitrary base, Deligne wrote a letter \cite{Delcubic} to the authors of \cite{GGS}
giving the following theorem.

\begin{theorem}[\cite{Delcubic},\cite{Poonen}]\label{cubicbij}
There is an isomorphism between the moduli stack for cubic algebras and the moduli stack for binary cubic forms.
That is, there is an equivalence of categories between the category of cubic algebras over $S$ where morphisms are given by
isomorphisms and the category of binary cubic forms over $S$ where morphisms are given by isomorphisms, and this
equivalence commutes with base change in $S$.
Thus, over a scheme $S$, there is a bijection between isomorphism classes of cubic algebras and isomorphism classes of binary cubic forms.
 If a cubic algebra $C$ corresponds to a binary cubic form $(V,f)$, then as $\OS$-modules,
we have $C/\OS\isom V^*$.
\end{theorem}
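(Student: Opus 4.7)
The plan is to construct quasi-inverse functors between the two categories, each manifestly compatible with base change.

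First I would construct the functor from cubic algebras to binary cubic forms. Given a cubic algebra $C$ over $S$, set $\bar{C} := C/\OS$ (locally free of rank $2$) and $V := \bar{C}^*$, so that the relation $C/\OS \isom V^*$ is built in. The assignment $\bar{x}\mapsto\bar{x}\wedge\overline{x^2}$, for $x$ any local lift of $\bar{x}\in\bar C$, is a well-defined cubic map $\bar{C}\to\wedge^2\bar{C}$: replacing $x$ by $x+c$ with $c\in\OS$ changes $\overline{x^2}$ by $2c\bar{x}$, which wedges with $\bar{x}$ to zero. Such a cubic map corresponds canonically to a section $f\in\Sym^3 V\otimes\wedge^2 V^*$, giving the functor; invariance under isomorphisms of $C$ is automatic.

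For the reverse functor I would work locally. On an open cover over which $V$ is trivialized with basis $x,y$, write $f = (ax^3+bx^2y+cxy^2+dy^3)\otimes(x\wedge y)^{-1}$ with $a,b,c,d\in\OS$, and define $C$ to be the free $\OS$-module on $1,\omega,\theta$ with Delone--Faddeev multiplication
\[
\omega\theta=-ad,\quad \omega^2=-ac+b\omega-a\theta,\quad \theta^2=-bd+d\omega-c\theta.
\]
A direct check of associativity on the three generators shows that $C$ is a commutative $\OS$-algebra. I would then verify that a change of basis by $M\in\GL_2(\OS)$ transforms $(a,b,c,d)$ by the standard $\GL_2$-action twisted by $\det^{-1}$ and induces a canonical isomorphism of the associated algebras, and that these isomorphisms satisfy the cocycle condition, so the local $C$'s glue to a global cubic algebra over $S$ functorial in $(V,f)$.

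To finish, I would verify the two functors are mutually quasi-inverse on local models. For $f\mapsto C\mapsto f'$, a direct computation of $\bar{x}\wedge\overline{x^2}$ for $x=s\omega+t\theta$ yields $-(as^3+bs^2t+cst^2+dt^3)\,\bar\omega\wedge\bar\theta$, which recovers $f$ up to the fixed sign convention. For $C\mapsto f\mapsto C'$, given a local basis $1,e_1,e_2$ of $C$ with $e_1e_2=p+qe_1+re_2$, the unique shift $(e_1,e_2)\mapsto(e_1-r,e_2-q)$ puts the multiplication into Delone--Faddeev form, from which one reads off $(a,b,c,d)$ and identifies $C'$ with $C$ canonically. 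Base change compatibility is immediate since every ingredient uses only tensors, duals, quotients, and symmetric or exterior powers. The main obstacle is the gluing step in the form-to-algebra direction: the twist by $\wedge^2V^*$ makes the $\GL_2$-transformation law of $(a,b,c,d)$ involve $\det M^{-1}$, so the induced algebra isomorphism is a non-trivial function of $M$, and the cocycle property must be checked by careful bookkeeping of how $1,\omega,\theta$ transform under changes of basis.
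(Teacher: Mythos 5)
Your proposal is correct and follows essentially the same route the paper lays out in Section~\ref{S:cubic}: the form is the (diagonal evaluation of the) map $\Sym_3 C/\OS \ra \wedge^2 C/\OS$, $xyz\mapsto x\wedge yz$, and the inverse is the local Delone--Faddeev multiplication table glued via the $\GL_2$-change-of-basis check. The only difference is one of packaging: the paper offers Deligne's hypercohomology construction precisely to avoid the ``careful bookkeeping'' of the cocycle condition that you correctly identify as the main remaining verification, but your direct gluing argument is the standard alternative and is sound.
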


This theorem is also proven in \cite[Proposition 5.1]{Poonen}.
Miranda \cite{Miranda} gives the bijection between isomorphism classes over a base which is an irreducible scheme over an algebraically closed field
of characteristic not equal to 2 or 3.
Also, this isomorphism of stacks is studied and proven as part of a series of such isomorphisms involving binary forms of any degree in \cite{binarynic}.

In \cite[Footnote 3]{HCL2}, the following algebraic, global, coordinate free description of the construction 
of a binary cubic form from a cubic algebra
is mentioned.
  Given a cubic algebra $C$, we can define an $\OS$-module $V=(C/\OS)^*$.  (Note that $V$ is a locally free rank $2$ $\OS$-module,
see e.g. \cite[Lemma 1.3]{Voight}.)
We can then define an $\OS$-module homomorphism
$\Sym_3 C/\OS \ra \wedge^2 C/\OS$ given by $ xyz \mapsto x \wedge yz$.  One can check that this map
is well-defined, and so it gives a binary cubic form $f\in (\Sym_3 C/\OS)^* \tesnor  \wedge^2 C/\OS
\isom \Sym^3 V \tesnor \wedge^2 V^*$.
Deligne in \cite{Delcubic} gives a different, geometric construction in
the case when $C$ is Gorenstein and then argues that the construction extends across the non-Gorenstein locus.

It is often useful to also have the following local, explicit version of the construction.
Where $C$ is a free $\OS$-module, we can choose a basis $1,\omega,\theta$ for $C$ and then shift $\omega$ and
$\theta$ by elements of $\OS$
so that $\omega\theta\in\OS$.  Then, the associative law implies that we have a multiplication table
\begin{align}
 \omega\theta&= -ad \notag\\ 
\omega^2 &= -ac +b\omega -a \theta\\
\theta^2&= -bd+d\omega-c\theta, \notag
\end{align}
where $a,b,c,d \in\OS$.  
Let $x,y$ be the basis of $V$ dual to $\omega,\theta$. 
Then we can define a form $(ax^3+bx^2y+cxy^2+dy^3)\tensor(x\wedge y)^*\in\Sym^3 V\tensor \wedge^2 V^*$.
We can check that if we pick another basis $1,\omega',\theta'$ (also
normalized so that $\omega'\theta'\in\OS$) and another corresponding $x'$ and $y'$ we would define the same
form in $\Sym^3 V\tensor \wedge^2 V^*$.
Thus the form is defined everywhere locally in a way that agrees on overlapping open sets, and we have constructed
a global binary cubic form $(V,f)$.  

One construction of a cubic algebra from a form (i.e. the inverse to the above construction) simply gives the cubic algebra locally by the above multiplication table.  This gives the bijection locally in terms of bases with explicit formulas.  However, it is hard to see where the formula for the multiplication table came from or why the local constructions are invariant under change of basis.
The following global description is given by Deligne in his letter \cite{Delcubic}.
Given a binary form $f\in \Sym^3 V\tensor \wedge^2 V^*$ over a base scheme $\BS$,
the form $f$ determines a subscheme $\Sf$ of $\P(V)$.
Let $\pi: \P(V) \ra S$.  Let $\mathcal{O}(k)$ denote
the usual sheaf on $\P(V)$ and $\OSf(k)$ denote the pullback
of $\mathcal{O}(k)$ to
\Sf. Then we can define the $\mathcal{O}_\BS$-algebra by the hypercohomology
\begin{equation}
C:=H^0 R\pi_* \left(\mathcal{O}(-3)\tensor \pi^*\wedge^2 V \stackrel{f}{\ra} \mathcal{O}\right),
\end{equation}
where $\mathcal{O}(-3)\tensor \pi^*\wedge^2 V \stackrel{f}{\ra} \mathcal{O}$ is a complex in degrees -1 and 0.
 The product on $C$ is 
given
by the product on the Koszul complex $\mathcal{O}(-3 ) \tensor \pi^*\wedge^2 V \stackrel{f}{\ra} \mathcal{O}$
with itself and the $\mathcal{O}_\BS$-algebra structure is induced from the map of $\mathcal{O}$
as a complex in degree 0 to the complex $\mathcal{O}(-3) \stackrel{f}{\ra} \mathcal{O}$ (see Appendix Section~\ref{Appendix2} for more details on the inheritance of the algebra structure).
(Note that $H^0R \pi_*(\mathcal{O})=\mathcal{O}_\BS$.)  

Given a map of schemes $X \stackrel{\pi}{\ra} S$, the construction of global functions of $X$ relative to $S$
is just the pushforward $\pi_* (\mathcal{O}_X)$.  So the natural notion of global functions of $S_f$ relative
to $S$ would be $\pi_*$ of $\OSf$.  We have that $\OSf=\OS/f(\mathcal{O}(-3)\tensor \pi^*\wedge^2 V)$.
  When $f$ is injective, then
$\OSf=\OS/f(\mathcal{O}(-3)\tensor \pi^*\wedge^2 V)$ as a complex in degree 0 has the same hypercohomology as
$\mathcal{O}(-3)\tensor \pi^*\wedge^2 V \stackrel{f}{\ra} \mathcal{O}$ as a complex in degrees -1 and 0.
Thus we see when $f$ is injective that $C$ is just $\pi_*(\mathcal{O}_\Sf)$.
 When $f$ gives an injective map and $S=\Spec R$ then $C$ is just the ring
of global functions of $\Sf$.  Unfortunately, this simpler construction does not give a cubic algebra when $f\equiv0$.
When $f\equiv0$, then $\Sf=\P^1$ and the global functions are a rank 1 $\OS$-algebra, i.e. $\OS$ itself.
 Hypercohomology is exactly the machinery we need to naturally extend the construction to all $f$.

\section{Cubic resolvents}\label{S:resolvent}

 We now give the definition of a cubic resolvent, given first in
\cite[Definition 20]{HCL3} over $\Z$. The definition might seem
complicated at first, but we will explain each aspect of it.

\begin{definition}
 Given a quartic algebra $Q$ over a base scheme $S$, a cubic resolvent
$C$ of $Q$ is
\begin{itemize}
 \item a cubic algebra $C$ over $S$ 
 \item a quadratic map $\phi : Q/\OS \ra C/\OS$, and
 \item an isomorphism $\delta : \wedge^4 Q \arisom \wedge^3 C$
(or equivalently $\bar{\delta}: \wedge^3 Q/\OS \arisom \wedge^2 C/\OS$), which we call the \emph{orientation}
\end{itemize}
such that
\begin{enumerate}
 \item for any open set $U\subset S$ and for all $x,y\in Q(U)$, 
we have $\delta(1\wedge x\wedge y\wedge xy)=1 \wedge \phi(x)\wedge \phi(y) $
 \item $C$ is the cubic algebra corresponding to $\Det (\phi)$.
\end{enumerate}
\end{definition}

Note that $Q/\OS$ and $C/\OS$ are locally free $\OS$-modules of ranks $3$ and $2$, respectively (see e.g. \cite[Lemma 1.3]{Voight}).
A \emph{quadratic map} from $A$ to $B$
is given by an $\OS$-module homomorphism $\Sym_2 A \ra B$ evaluated on the diagonal (see the Appendix Section~\ref{SS:degkmaps}).
(In \cite[Proposition 6.1]{binquad} it is shown this is equivalent to the more classical notion of a quadratic map.)  The map $\phi$ models the map from quartic fields
to their resolvent fields given by $x\mapsto xx' + x''x'''$, where
$x,x',x'',x'''$ are the conjugates of an element $x$.  In \cite[Lemma 9]{HCL3}
it is shown that condition 1 above holds for such classical resolvent maps,
and it turns out that condition 1 is the key property of resolvent maps
that allows them to be useful in the parametrization of quartic algebras.
So the definition of resolvent allows all quadratic maps that have this key property.

Another important property of the cubic resolvent over $\Z$ is that the discriminant of
the cubic resolvent is equal to the discriminant of the quartic ring.  In \cite{HCL3}, this is a crucial part of the definition
of a cubic resolvent over the integers.  With the above formulation
of the definition of a cubic resolvent, the equality of discriminants follows as a corollary of properties 1 and 2.  However, since the discriminant of an algebra $R$
of rank $n$
lies in $(\wedge^n R)^{\tesnor -2}$, we need the orientation isomorphism to even
state the question of the equality of discriminants.
The orientation is a phenomenon that it is hard to recognize the importance of over $\Z$ because
$\GL_1(\Z)$ is so small, however it appears in Bhargava's \cite{HCL3} choice of bases for a quartic ring and its cubic resolvent.

The quadratic map $\phi$ is equivalent to a double ternary quadratic form in the module
$\Sym^2 (Q/\OS)^*\tensor C/\OS$.   The determinant of
a double ternary quadratic form is given by a natural cubic map from $\Sym^2 W \tesnor V$ to $(\wedge^3 W)^{\tensor 2}\tesnor\Sym^3 V  $.  
We have a natural cubic determinant map from $\Sym^2 W $ to $(\wedge^3 W)^{\tensor 2}$. For free $W$ and an element of $\Sym^2 W $ represented by the matrix
$$
A=\begin{pmatrix} a_{11} & \frac{a_{12}}{2} & \frac{a_{13}}{2} \\
\frac{a_{12}}{2} & a_{22} & \frac{a_{23}}{2} \\
\frac{a_{13}}{2} & \frac{a_{23}}{2} & a_{33} 
\end{pmatrix},
$$
the map is given by the polynomial $4\Det(A)$, and since this is invariant under $\GL_3$ change of basis, 
it defines a determinant map
for all locally free $W$.  
We use $2$'s in the denominator of our expression for $A$ because it allows us a convenient way to express the polynomial $4\Det(A)$, but note that
the polynomial given by $4\Det(A)$ does not have any denominators, and thus we do not need to require that 2 is invertible to construct the determinant of a double ternary quadratic form.
We can extend to a cubic determinant map from 
$\Sym^2 W \tesnor V$ to $(\wedge^3 W)^{\tensor 2}\tesnor\Sym^3 V $
by using the elements of $V$ as coefficients (see Appendix Section~\ref{SS:degkcoeff}).
Thus the determinant of $\phi$ lies
in   $(\wedge^3 Q/\OS)^{\tensor -2}\tesnor\Sym^3 (C/\OS)$, which is isomorphic to
 $(\wedge^2 C/\OS)\tesnor\Sym^3 (C/\OS)^* $ by the orientation isomorphism
(see also Corollary~\ref{C:cubic} in the Appendix).  From Theorem~\ref{cubicbij}, we have that $C$ corresponds to a global section
of $(\wedge^2 C/\OS)\tesnor\Sym^3 (C/\OS)^* $, and condition 2 above is that $C$ corresponds to the section $\Det(\phi)$.

When we speak of a pair $(Q,C)$ of a quartic algebra $Q$ and a cubic resolvent $C$ of $Q$, the 
maps $\phi$ and $\delta$ are implicit.  An isomorphism of pairs is
given by isomorphisms of the respective algebras that respect $\phi$ and $\delta$.

\section{The geometric construction}\label{S:geom}
In this section, we will construct a quartic algebra from a double ternary quadratic form $p\in \Sym^2 W \tesnor U$ over a base $S$.
We consider the map $\pi :\P(W) \ra S$, and the usual line bundles $\mathcal{O}(k)$ on $\P(W)$.  We can view $p$ as a two dimensional family of quadratic forms on $\P(W)$
(the two dimensions being given by $U$).  More precisely, since $p$ is equivalent to a map
$U^* \ra \Sym^2 W$,  we have a naturally induced map $\pi^* U^* \ra \mathcal{O}(2)$, which is equivalent to a map
$p_1 : \pi^* U^* \tensor  \mathcal{O}(-2)\ra \mathcal{O}$.  The image of $p_1$ is functions that are zero on the space cut out by the forms of $p$.  The regular functions on the scheme cut out by $p$ are just given by $\mathcal{O}/\im(p_1)$.  From $p$ we can construct one more map to make the Koszul complex of $p$, given as follows
$$
\mathcal{K}_p : \quad \wt \pi^* U^* \tensor \mathcal{O}(-4) \xrightarrow{p_2 } \pi^* U^* \tensor \mathcal{O}(-2)\xrightarrow{p_1} \mathcal{O}.
$$

The complex $\mathcal{K}_p$ has $\mathcal{O}$ in degree 0, and the other two terms in degrees $-1$ and $-2$.
We can construct $p_2$ similarly to $p_1$ since $p$ is also equivalent to a map $\wt U^* \tensor U  \ra \Sym^2 W$.
(Recall $\wt U^* \tensor U \isom U^*$; see Lemma~\ref{L:rk2}.)   One can read about the construction of all the maps in the Koszul complex in \cite[Appendices A2F and A2H]{GeomSyz}.

\begin{example}\label{freeexample}
Suppose $U$ is free with basis $x,y$, and dual basis $\dot{x}$ and $\dot{y}$.  Then we can write $p= f_1 \tesnor x +f_2\tensor y$.
The map $p_1$ just sends $\dot{x}\tesnor g \mapsto f_1 g$ and $\dot{y}\tesnor g \mapsto f_2 g$.  
We can write how $p_1$ acts on a general element as
$ a \tesnor g \mapsto  gp(a)$, where $p$ acts on an element of $U^*$ by evaluating the $U$ components of $p$
at the given element of $U^*$.
The map $p_2$
sends $\dot{x} \wedge \dot{y} \tesnor g \mapsto gf_1\tesnor \dot{y} -gf_2 \tensor \dot{x}$.  We can write how $p_2$ acts on a general element as
$ a\wedge b \tesnor g \mapsto b\tesnor gp(a) - a \tensor g p(b)$.
From this we see that $\mathcal{K}_p$ is a complex.
\end{example} 

For sufficiently nice $p$ the Koszul complex will be exact in all places except the last and thus give a resolution of $\mathcal{O}/\im(p_1)$.  For example, this it true when $p$ is the universal double ternary quadratic form over the polynomial ring in twelve variables.  
In this well-behaved case, $p$ will cut out four (relative) points in $\P(W)$ (i.e. a finite, flat degree four $S$-scheme) and the pushforward of the global functions of those points
will give us a quadratic algebra over the base $S$.

When the Koszul complex of $p$ is not a resolution, instead of taking the pushforward of the global functions of the scheme cut out by $p$, we will take the $0$th hypercohomology of the complex $\mathcal{K}_p$.
We define $Q_p$ to be $H^0 R \pi_* (\mathcal{K}_p)$, where $R \pi_*$ denotes the pushforward of the complex in the derived category.
Alternatively, we can view the construction as the hypercohomological
derived functor of $\pi_*$, where the hypercohomology is necessary since we
 are operating on a complex and not just a single sheaf.
If $p$ is nice enough that its Koszul complex $\mathcal{K}_p$ is a resolution of $\mathcal{O}/\im(p_1)$, then $Q_p$ will just be $\pi_*(\mathcal{O}/\im(p_1))$.  However, what is convenient about the hypercohomology construction is that $Q_p$ will be a quartic algebra 
even when $\mathcal{K}_p$ is not a resolution (as we'll see in Section~\ref{SS:module}).  So far we have constructed $Q_p$ as an $\OS$-module,
however, the Koszul complex has a natural differential graded algebra structure, and that gives the cohomology an inherited algebra structure (see Appendix Section~\ref{Appendix2} for more details on the inheritance of the algebra structure).  
The map from $\mathcal{O}$ as a complex in degree 0 to the complex $\mathcal{K}_p$ induces a map from $H^0R \pi_*(\mathcal{O})=\mathcal{O}_\BS\ra Q_p$. 
This gives $Q_p$ the structure of an $\OS$-algebra.

\subsection{Examples when $\mathcal{K}_p$ is not a resolution}
When constructing the cubic algebra from a binary cubic form, we took
$$
H^0 R \pi_* ( \mathcal{O}(-3) \xrightarrow{f} \mathcal{O} )
$$
on $\P(V)$, which, as long as the cubic form $f$ gives an injective map above is the same as
$ \pi_* (\mathcal{O}/\im f)$.  For example, when the base $S$ is integral, whenever $f\not\equiv 0$
then $\mathcal{O}(-3) \xrightarrow{f} \mathcal{O}$ is injective.  However, when $f\equiv0$, of course
$\mathcal{O}(-3) \xrightarrow{f} \mathcal{O}$ is not injective, and $H^0 R \pi_* ( \mathcal{O}(-3) \xrightarrow{f} \mathcal{O} )$
is not the same as $\pi_* (\mathcal{O}/\im f )$.  When $f\equiv0$, the latter is an $\OS$-module of rank 1.

Again, when constructing our quartic algebra as $H^0 R \pi_* (\mathcal{K}_p)$, if $p\equiv0$ the complex
will not be a resolution and $H^0 R \pi_* (\mathcal{K}_p)$ won't agree with
$ \pi_* (\mathcal{O}/\im p_1 )$.  
This is the case when both ``conics'' are given by the 0 form. 
However, even over an integral base, there are now more situations on which
the complex $\mathcal{K}_p$ is not a resolution.  
The geometric constructions of Casnati and Ekedahl \cite{CasI} and Deligne \cite{Delquartic} for certain nice quartic algebras 
are in cases when $ \pi_* (\mathcal{O}/\im p_1 )$ simply gives the quartic algebra.

We now give several examples in which $\mathcal{K}_p$ is not a resolution. 

\begin{example}
 Let $p\equiv 0$.  Then $Q_p = \OS \oplus W^*$, with the multiplication given by
$W^* \tensor_{\OS} W^* \ra 0$.
\end{example}

Let $U$ be free with the notation of Example~\ref{freeexample}.

\begin{example}
  If $f_2\equiv 0$, then 
$Q_p = \OS \oplus W^*$, with the multiplication given by
$W^* \tensor_{\OS} W^* \ra 0$.
\end{example}

Now, let $W$ be free on $w_1,w_2,w_3$.

\begin{example}
If $f_1=w_1w_2$ and $f_2=w_1w_3$, then $Q_p\isom \OS \oplus \OS[z_1,z_2]/(z_1,z_2)^2$.
This is the case where the two conics share a linear component, and the
pencil of second lines all go through a  point not on the shared line.
\end{example}

\begin{example}
If $f_1=w_1w_2$ and $f_2=w_1^2$, then $Q_p\isom \OS[z_1,z_2]/(z_1^3,z_1z_2,z_2^2)$.
This is the case where the two conics share a linear component, and the
pencil of second lines all go through a  point on the shared line.
\end{example}

\begin{example}
 If $f_1=w_1^2+w_1w_3$ and $f_2=w_2^2+w_2w_3$, then $\mathcal{K}_p$ \emph{is} a resolution
and $Q_p\isom R:=\OS\oplus\OS\oplus\OS\oplus\OS$.
However, unlike in the case of binary cubic forms, we can change $p$ in just one closed fiber
and $\mathcal{K}_p$ will no longer be a resolution.  For simplicity, let $S=\Spec \Z$, and let
$q$ be a prime.  Then if $f_1=q(w_1^2+w_1w_3)$ and $f_2=q(w_2^2+w_2w_3)$, the global functions
of the subscheme cut out by $p$ 
are isomorphic to $\Z \oplus pR \sub R$ (a quartic \Z-algebra) but $Q_p\isom \Z \oplus p^2R \sub R$. 
\end{example}

\subsection{Module structure of $Q_p$}\label{SS:module}

In this section, we determine the $\OS$-module structure of $Q_p$.
We consider the short exact sequence of complexes $O\ra \mathcal{A} \ra \mathcal{K}_p \ra \mathcal{D} \ra 0$, where
\begin{equation*}
\xymatrix @R=.1in {
 \mathcal{A}: & 0 \ar[r]& \pi^* U^* \tensor \mathcal{O}(-2)\ar[r]^-{p_1}& \mathcal{O}\\
\mathcal{K}_p:&  \wt \pi^* U^* \tensor \mathcal{O}(-4) \ar[r]^{p_2 }& \pi^* U^* \tensor \mathcal{O}(-2) \ar[r]^-{p_1}& \mathcal{O}\\
\mathcal{B}:&  \wt \pi^* U^* \tensor \mathcal{O}(-4) \ar[r]& 0 \ar[r]& 0.}
\end{equation*}
From this short exact sequence we obtain a long exact sequence of hypercohomology sheaves
on $S$, of which we consider the following part
$$
\xymatrix@R=.1in{
 {H ^{-1} R\pi_*} (\mathcal{B}) \ar[r] &
{H ^0 R\pi_*} ( \mathcal{A}) \ar[r] & {H ^0 R\pi_*} (\mathcal{K}_p) \ar[r] \ar@{=}[d] & {H ^0 R\pi_*} (\mathcal{B} ) \ar[r] & {H ^1 R\pi_*} ( \mathcal{A}).\\
& & Q_p
} 
$$
This sequence will allow us the determine the modules structure of $Q_p$ once we compute the other terms.
It is natural to shift the term in $\mathcal{B}$ to degree 0 and obtain
$$
\xymatrix@R=.15in{
 R ^{1} \pi_* (\wt \pi^* U^* \tensor \mathcal{O}(-4)) \ar[r] \ar@{=}[d]& {H ^0R \pi_*}  (\mathcal{A})\ar[r] & Q_p\ar[r] &R ^2 \pi_* (\wt \pi^* U^* \tensor \mathcal{O}(-4) ) \ar[r]\ar@{=}[d] & { H ^1 R\pi_*}  (\mathcal{A}).\\
0 & & & W^* \tesnor \wedge^3 W^* \tensor \wedge^2 U^* \ar[d]^{\isom}&\\
& & &W^* 
}
$$
 We can analyze the $\mathcal{A}$ terms by putting the complex $\mathcal{A}$
in its own short exact sequence of complexes $0\ra\mathcal{D} \ra \mathcal{A} \ra \mathcal{E}\ra 0$, given by the following
\begin{equation*}
\xymatrix@R=.1in{
 \mathcal{D}:& 0\ar[r] &  \mathcal{O}\\
\mathcal{A} :& \pi^* U^* \tensor  \mathcal{O}(-2)\ar[r]^-{p_1}&  \mathcal{O}\\
\mathcal{E} :&\pi^* U^* \tensor \mathcal{O}(-2)\ar[r] & 0.}
\end{equation*}
Taking the long exact sequence for this short exact sequence of complexes gives
$$
{H ^{-1}R \pi_*} (\mathcal{E})\ra {H ^0 R\pi_*} ( \mathcal{D})\ra {H ^0 R\pi_*} (\mathcal{A})\ra {H ^0R \pi_*} (\mathcal{E} ) \ra {H ^1R \pi_*} ( \mathcal{D})\ra {H ^1R \pi_*} (\mathcal{A})\ra {H ^1R\pi_*} (\mathcal{E}),
$$
or
$$
\xymatrix@R=.1in{
 R ^{0} \pi_* (\pi^* U^* \tensor \mathcal{O}(-2))\ar[r]\ar@{=}[d] &  R ^0 \pi_* ( \mathcal{O})\ar[r] \ar@{=}[d]& {H ^0R \pi_*} (\mathcal{A})\ar[r] &  R ^1 \pi_* (\pi^* U^* \tensor \mathcal{O}(-2) ) \ar@{=}[d]\\
0 & \OS & & 0
}
$$
and $$
\xymatrix@R=.1in{
R ^1 \pi_* ( \mathcal{O})\ar[r]\ar@{=}[d] & {H ^1 R\pi_*} (\mathcal{A})\ar[r]&  R ^2 \pi_* (\pi^* U^* \tensor \mathcal{O}(-2) ).\ar@{=}[d] \\
0 & & 0}
$$
Thus, we conclude that 
${H ^0 R\pi_*} (\mathcal{A})\isom \OS$ and ${H ^1 R\pi_*} (\mathcal{A})=0$.

Going back to our original long exact sequence, we have
$$
 0 \ra \OS \ra Q_p \ra  W^*  \ra 0.
$$
This proves that $Q_p$ is a locally free rank 4 $\OS$-module.  Also, it gives us the 
necessary map $\OS \ra Q_p$ for our algebra to have a unit.  (We can check this map respects the algebra structures
because it is induced from the map of complexes $\mathcal{D}\ra \mathcal{K}_p$ that respects the differential
graded algebra structures on $\mathcal{D}$ and $\mathcal{K}_p$.)

\begin{theorem}
 The construction of $Q_p$ commutes with base change in $S$.
\end{theorem}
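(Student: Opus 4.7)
The plan is to reduce the base-change claim to the statement that each cohomology group appearing in the long exact sequences derived in Section~\ref{SS:module} is formed compatibly with base change, and then invoke naturality of the hypercohomology machinery and of the Koszul construction itself.

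First I would fix a morphism $f:T\to S$, form the Cartesian square with projection $\pi_T:\P(f^*W)\to T$ and $f':\P(f^*W)\to \P(W)$, and verify that pulling back is compatible with the Koszul construction. Each term of $\mathcal{K}_p$ is of the form $\wedge^i\pi^*U^*\tensor\mathcal{O}(-2i)$, so $Lf'^*\mathcal{K}_p$ agrees term by term with $\mathcal{K}_{f^*p}$; the differentials $p_1,p_2$ are assembled out of $p$, and since the formation of $p$-multiplication uses only tensor and wedge of $U^*$, it commutes with $f^*$. Thus at the level of complexes on $\P(W)$ one has a tautological identification $Lf'^*\mathcal{K}_p\simeq \mathcal{K}_{f^*p}$.

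Next I would argue that $R\pi_*\mathcal{K}_p$ commutes with base change. Since $\pi$ is proper and smooth, flat base change gives a natural morphism $Lf^*R\pi_*\mathcal{K}_p\to R\pi_{T*}Lf'^*\mathcal{K}_p$, and it suffices to know that for each term $\mathcal{K}_p^j$ of the complex one has $Lf^*R\pi_*\mathcal{K}_p^j\simeq R\pi_{T*}f'^*\mathcal{K}_p^j$. By the projection formula this reduces to the analogous statement for $R\pi_*\mathcal{O}(-2i)$ on the projective bundle $\P(W)$, which is standard: the cohomology of line bundles on a projective bundle is computed universally on $S$ (vanishing, or expressible via $\wedge^3 W^*$ by Serre duality) and therefore commutes with arbitrary base change. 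In particular, all of the concrete pushforward identifications used in Section~\ref{SS:module}---$R^0\pi_*\mathcal{O}=\OS$, the vanishing of $R^0\pi_*\mathcal{O}(-2)$, $R^1\pi_*\mathcal{O}(-2)$, $R^1\pi_*\mathcal{O}(-4)$ and $R^0\pi_*\mathcal{O}(-4)$, and the identification $R^2\pi_*\mathcal{O}(-4)\isom\wedge^3 W^*$---commute with base change.

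I would then take the long exact sequences of the short exact sequences of complexes $0\to\mathcal{A}\to\mathcal{K}_p\to\mathcal{B}\to 0$ and $0\to\mathcal{D}\to\mathcal{A}\to\mathcal{E}\to 0$ used in Section~\ref{SS:module}. All maps in these sequences are induced by maps of complexes that are themselves natural in $p$, hence compatible with $f^*$. Combining this with the individual base-change compatibilities of the preceding paragraph, the short exact sequence
\begin{equation*}
0\lra \OS \lra Q_p \lra W^* \lra 0
\end{equation*}
pulls back term by term to the corresponding sequence for $Q_{f^*p}$, yielding an isomorphism $f^*Q_p\arisom Q_{f^*p}$ of $\mathcal{O}_T$-modules compatible with the unit maps from $\mathcal{O}_T$. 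Finally, because the multiplication on $Q_p$ is induced by the differential graded algebra structure on the Koszul complex---and wedge product of Koszul complexes is itself manifestly natural under pullback---this module isomorphism respects the algebra structures, so $f^*Q_p\isom Q_{f^*p}$ as $\mathcal{O}_T$-algebras.

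The main obstacle is the bookkeeping in the second paragraph: one must be careful that flat base change applies to $R\pi_*$ of an unbounded-in-one-direction complex. This is handled by the fact that $\mathcal{K}_p$ is a bounded complex of locally free sheaves, so the derived pullback reduces to ordinary pullback and the cohomology-and-base-change theorem for projective bundles applies termwise, with no higher $\mathrm{Tor}$ corrections to worry about.
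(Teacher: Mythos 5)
Your argument is correct, and the essential input is the same as the paper's: the computation of $R^k\pi_*$ of the line bundles $\mathcal{O}$, $\mathcal{O}(-2)$, $\mathcal{O}(-4)$ on the $\P^2$-bundle $\P(W)$, which are each either zero or locally free and formed compatibly with arbitrary base change. Where you diverge is in how you package the conclusion. The paper observes that these vanishings force $H^kR\pi_*(\mathcal{K}_p)=0$ for $k\ne 0$ and that $H^0R\pi_*(\mathcal{K}_p)$ is locally free, so that \emph{all} hypercohomology sheaves are flat, and then cites the general criterion of \cite[Corollaire 6.9.9]{EGA3} to conclude in one stroke that cohomology and base change commute. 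You instead rerun the devissage of Section~\ref{SS:module} after pullback: termwise base change for the bounded complex of locally free sheaves, naturality of the two short exact sequences of complexes, and a five-lemma argument on the resulting sequence $0\to\OS\to Q_p\to W^*\to 0$ (which stays exact under pullback because $W^*$ is locally free). Your route is more hands-on and avoids the EGA citation, at the cost of more bookkeeping; it also makes the compatibility of the unit map and the algebra structure visibly natural, which the paper leaves implicit. One small correction: the comparison morphism $Lf^*R\pi_*\mathcal{K}_p\to R\pi_{T*}Lf'^*\mathcal{K}_p$ is not supplied by ``flat base change'' --- $f$ is an arbitrary morphism here --- but exists for any Cartesian square by adjunction; it is an isomorphism in your setting precisely because the termwise statements hold for arbitrary base change, which is what you in fact verify.
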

\begin{proof}
 To prove this theorem, we will compute all of the cohomology of $\mathcal{K}_p$.
The complex $\mathcal{K}_p$ has no cohomology in degrees other than 0.
We have $R^k\pi(\mathcal{O}(-4))=0$ for $k\ne 2$, and
$R^k\pi_*(\mathcal{O}(-2))=0$ for all $k$,
and $R^k\pi_*(\mathcal{O})=0$ for $k\ne 0$.  
Thus $H^kR\pi_*(\mathcal{K}_p)=0$ for $k\ne 0$.
We have just  seen that 
$H^0R\pi_*(\mathcal{K}_p)$ is locally free.  Thus since all $H^{i}R\pi_*(\mathcal{K}_p)$ are flat,
by \cite[Corollaire 6.9.9]{EGA3}, we have that cohomology and base change commute.
\end{proof}

\section{Local construction by multiplication table}\label{S:local}
Given a double ternary quadratic form $p\in \Sym^2 W \tensor U$ (with a given $\wedge^3 W \isom \wt U^*$), now that
we know that there is a natural quartic algebra $Q_p$ we could define the structure locally where $W$ and $U$ are free
by giving multiplication tables, as in the case of cubic algebras from binary cubic forms.

For a double ternary quadratic form over $\Z$ (and therefore with $W$ and $U$ necessarily free), Bhargava \cite[Equations (15) and (21)]{HCL3} gives
a ring structure on $\Z^4$ whose multiplication table is given in terms of the coefficients of $p$.  Each 
entry in the multiplication table is a polynomial in the coefficients of $p$.  This, of course, is the multiplication table we would impose for free $W$ and $U$ in the above local construction.  We will now see that this local construction agrees with the geometric construction we have given in Section~\ref{S:geom}.  We will show this by working over the universal algebra $R=\Z[\{a_{ij},b_{ij}\}_{1\leq i\leq j \leq 3}]$ for double ternary quadratic forms, and with the universal free form $u=\sum_{1\leq i\leq j \leq 3} a_{ij} x_ix_j y_1 + b_{ij} x_ix_j y_2$.

\begin{theorem}\label{SameOnUniversal}
 For the universal form $u$, the quartic algebra $Q_u$ is isomorphic to the quartic algebra over $R$ that is constructed above using Bhargava's multiplication tables.   
\end{theorem}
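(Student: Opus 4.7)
The plan is to reduce $Q_u$ to a concrete pushforward and then match it coordinate-by-coordinate against Bhargava's multiplication table. The first step is to verify that for the universal form $u$ the Koszul complex $\mathcal{K}_u$ is a genuine resolution, so that $Q_u$ reduces to an ordinary pushforward. The two universal ternary forms $A=\sum a_{ij}x_ix_j$ and $B=\sum b_{ij}x_ix_j$ each involve a fresh set of indeterminate coefficients: $A$ is irreducible in $R[x_1,x_2,x_3]$ because it is linear in the $a_{ij}$, and $B$ is nonzero modulo $A$ because the $b_{ij}$ are algebraically independent of the $a_{ij}$. Hence $(A,B)$ is a regular sequence in $R[x_1,x_2,x_3]$, so on each standard affine chart of $\P(W)$ the Koszul complex of $A,B$ is exact in negative degrees. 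Combined with the vanishing of the higher $R^k\pi_*$ of $\mathcal{O}(-2)$ and $\mathcal{O}$ noted in Section~\ref{SS:module}, this yields a canonical isomorphism $Q_u\cong\pi_*(\mathcal{O}_{X_u})$, where $X_u\subset\P^2_R$ is the finite, flat, degree-$4$ subscheme cut out by $A$ and $B$.

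Next I would produce an explicit basis. The exact sequence $0\to\OS\to Q_u\to W^*\to 0$ established in Section~\ref{SS:module} gives, after any splitting, a basis $1,\alpha_1,\alpha_2,\alpha_3$ of $Q_u$ as a free $R$-module, where $\alpha_1,\alpha_2,\alpha_3$ lift a chosen basis of $W^*$. The product $\alpha_i\alpha_j$ is then computed by writing a degree-$2$ representative in the Koszul--\v{C}ech bicomplex and reducing modulo the relations $A\equiv 0$, $B\equiv 0$; the resulting coefficients in the expansion as $R$-linear combinations of $1,\alpha_1,\alpha_2,\alpha_3$ are explicit polynomials in the $a_{ij}$ and $b_{ij}$. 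This gives a multiplication table directly comparable to Bhargava's.

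The main obstacle is the term-by-term match with Bhargava's formulas in \cite[Equations (15) and (21)]{HCL3}. Since both constructions give $R$-algebra structures on a free module of rank $4$ over the domain $R$, and the orientation isomorphism $\wedge^3 W\cong\wedge^2 U^*$ pins down the normalizations on both sides, I would adjust the splitting of $0\to\OS\to Q_u\to W^*\to 0$ so that the lifts $\alpha_i$ satisfy Bhargava's trace-type normalization, and then verify agreement for a small set of structure constants from which the rest are forced by associativity. The technical work is bookkeeping: keeping sign conventions, dual bases of $U$ and $W$, and the orientation trivialization consistent with Bhargava's choices. I expect this to be the most painful part; once the conventions are aligned, the polynomial identities should check out directly from the Koszul-reduction formulas.
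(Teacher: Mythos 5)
Your plan is essentially the paper's own proof: identify $Q_u$ with $\pi_*(\mathcal{O}_{S_u})$ via exactness of $\mathcal{K}_u$ (you supply a regular-sequence argument where the paper merely asserts exactness), extract a rank-$4$ basis lifting a basis of $W^*$, and compute the structure constants by explicit gluing/\v{C}ech reduction modulo $A$ and $B$ to match Bhargava's tables. The paper's execution of the "bookkeeping" step is exactly the $T_1/T_2$ analysis producing the generators $g_1,\dots,g_4$ and the normalization $\alpha_i=-g_{i+1}$, so your outline is correct and follows the same route.
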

In particular, since our geometric construction of $Q_u$ is invariant under change of basis of $W$ and $U$ 
(respecting $\wedge^3 W \isom \wt U^*$), this gives a proof of the invariance of Bhargava's multiplication table under change of basis,
as long as the correct $\GL_3 \times \GL_2$ action is used.  Since all double ternary quadratic forms are locally
pull-backs from the universal form, and both the local construction by multiplication tables and the global geometric construction of Section~\ref{S:geom} respect base change, Theorem~\ref{SameOnUniversal} implies that the two constructions of quartic algebras from
double ternary quadratic forms agree.  We now prove Theorem~\ref{SameOnUniversal}. 

\begin{proof}
For the universal form $u$, the complex $\mathcal{K}_u$ used to define $Q_u$ is exact, and therefore $Q_u$ is just
the global functions on the scheme $S_u$ in $\P^2_R$ cut out by $A=\sum_{1\leq i \leq j \leq 3} a_{ij} x_ix_j $ and $B=\sum_{1\leq i \leq j \leq 3} b_{ij} x_ix_j$.
(We can just work in terms of global functions instead of the pushforward to the base since the base $\Spec R$ is affine.
Moreover, the multiplicative structure of the global functions of $S_u$ is the same as the induced multiplicative structure
on the hypercohomological construction of $Q_u$.)
We cover $S_u$ with open sets $\U_{x_i}$ coming from the usual open sets in $\P^2_R$.  As a first step, we will find 
$(f,g)\in \Gamma(\U_{x_i}) \times \Gamma(\U_{x_j})$ such that $f=g$ in $\Gamma(\U_{x_i} \cap\U_{x_j})$.
This will find all regular functions on $\U_{x_i} \cup\U_{x_j}$, and it will turn out that they all extend uniquely to
global functions on $S_u$.  Thus, we will have found all the regular functions on $S_u$. 
We will identify these regular functions with the  basis in Bhargava's quartic ring construction, and then it can be checked
that the multiplication tables agree. 

Let $i,j,k$ be some permutation of $1,2,3$.
We have that $$\Gamma(\U_{x_i})=R[x_j/x_i, x_k/x_i]/(A/x_i^2, B/x_i^2).$$
Let $I_i$ be the ideal $(A/x_i^2, B/x_i^2)$ of $R[x_j/x_i, x_k/x_i]$, and similarly for $I_j$.
Also, $$\Gamma(\U_{x_i} \cap\U_{x_j})=R[x_j/x_i, x_k/x_i, x_i/x_j]/(A/x_i^2, B/x_i^2).$$  If
we have $(f,g)\in \Gamma(\U_{x_i}) \times \Gamma(\U_{x_j})$ such that $f=g$ in $\Gamma(\U_{x_i} \cap\U_{x_j})$, then 
$f$ and $g$ are represented by polynomials $\tilde{f}\in R[x_j/x_i, x_k/x_i]$ and $\tilde{g}\in R[x_i/x_j, x_k/x_j]$
such the element $\tilde{f}-\tilde{g} \in R[x_j/x_i, x_k/x_i, x_i/x_j]$ is in the ideal $I=(A/x_i^2, B/x_i^2)$.
However, $\tilde{f}-\tilde{g}$ will not have any terms with an $x_i$ and an $x_j$ in the denominator.
We define $T_1$ to be the sub $R$-module of $I$ of elements that do not have any terms with
both an $x_i$ and an $x_j$ in the denominator.  The set $T_1$ gives all the relations between
polynomials representing elements in $\Gamma(\U_{x_i})$ and polynomials representing elements in $\Gamma(\U_{x_j})$.
We define $T_2$ to be the sub $R$-module of $T_1$ generated by the images of $I_i$ and $I_j$
under their natural inclusion into $R[x_j/x_i, x_k/x_i, x_i/x_j]$.  The set $T_2$ gives all the relations of 
$T_1$ that come from relations already in $\U_{x_i}$ and already in $\U_{x_j}$.  We now seek to determine $T_1/T_2$, 
which gives rise to all
pairs $(f,g)\in \Gamma(\U_{x_i}) \times \Gamma(\U_{x_j})$ such that $f=g$ in $\Gamma(\U_{x_i} \cap\U_{x_j})$
that are not functions on the base $\Spec R$.

We first define some notation to help us write down elements of $T_1/T_2$.  Let $A_{i^mj^n}=A\frac{x_k^{m+n-2}}{x_i^m x_j^n}$,
where the subscript $i^mj^n$ is a product of formal symbols, where a missing exponent denotes an exponent of 1.  
We define $B_{i^mj^n}$ analogously.
\begin{lemma}
 Let $t\in T_1/T_2$.  We can write 
$$t=
\mathop{\sum_{m,n\geq 1}}_{m+n\leq 3}
c_{m,n} A_{i^mj^n} + d_{m,n} B_{i^mj^n} \quad\quad \text{with } c_{m,n},d_{m,n}\in R.$$
\end{lemma}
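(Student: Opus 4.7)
The plan is to express any $t\in T_1$ as a Laurent polynomial in the ambient ring $R[x_i^{\pm 1},x_j^{\pm 1},x_k^{\pm 1}]$, use the explicit description of $T_2$ to reduce its support to the monomials $\mu_{m,n}:=x_i^{-m}x_j^{-n}x_k^{m+n-2}$ for $m,n\ge 1$, and then exploit the $T_1$ condition together with the universality of $R$ to eliminate the contributions from $m+n\ge 4$.

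First I would write $t=\alpha\cdot A+\beta\cdot B$ with $\alpha,\beta\in R[x_j/x_i,x_k/x_i,x_i/x_j]$, so that as Laurent polynomials in $(x_i,x_j,x_k)$ both $\alpha$ and $\beta$ are homogeneous of total degree $-2$ and every monomial $x_i^ax_j^bx_k^c$ appearing satisfies $c\ge 0$ (this is possible because the ideal $I\subset R[x_j/x_i,x_k/x_i,x_i/x_j]$ is generated by $A/x_i^2$ and $B/x_i^2$). The subideal $I_i\subset T_2$ is generated over $R[x_j/x_i,x_k/x_i]$, so modulo $T_2$ I may subtract from $\alpha$ or $\beta$ any monomial with $a\le -2$, $b\ge 0$, $c\ge 0$; symmetrically $I_j$ lets me subtract monomials with $b\le -2$, $a\ge 0$, $c\ge 0$. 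After both reductions the only surviving monomials of total degree $-2$ with $c\ge 0$ are those having $a,b\le -1$, which forces $c=-2-a-b=(-a)+(-b)-2\ge 0$; these are exactly the $\mu_{m,n}$ for $m,n\ge 1$. Hence, modulo $T_2$,
\[
t\equiv \sum_{m,n\ge 1}\bigl(c_{m,n}A_{i^mj^n}+d_{m,n}B_{i^mj^n}\bigr),\qquad c_{m,n},d_{m,n}\in R.
\]

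The final and most delicate step is to truncate this sum to $m+n\le 3$. Since $t\in T_1$, for every ``bad'' triple $(-M,-N,M+N)$ with $M,N\ge 1$ the total coefficient of $x_i^{-M}x_j^{-N}x_k^{M+N}$ in $t$ must vanish in $R$. Expanding the contributions from each $\mu_{m,n}\cdot A$ and $\mu_{m,n}\cdot B$, only six values of $(m,n)$ (namely $(M,N)$, $(M{+}1,N)$, $(M,N{+}1)$, $(M{+}2,N)$, $(M,N{+}2)$, $(M{+}1,N{+}1)$) contribute via the six entries $a_{kk},a_{ik},a_{jk},a_{ii},a_{jj},a_{ij}$ of $A$, and likewise for $B$. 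I would then induct downward on $m+n$: starting from the maximal $(m^*,n^*)$ with $c_{m^*,n^*}$ or $d_{m^*,n^*}$ nonzero, the slot $(-m^*,-n^*,m^*+n^*)$ receives only the contribution $c_{m^*,n^*}a_{kk}+d_{m^*,n^*}b_{kk}$, so this sum must vanish in $R$. Applying this to all nearby slots and using the algebraic independence of the coefficients $a_{pq},b_{pq}$ in the universal ring, together with additional $T_2$-moves that trade each higher-order generator for a combination of lower-order ones (these moves use that the Koszul syzygy $B\cdot A=A\cdot B$ identifies $\mu B\cdot A$ and $\mu A\cdot B$), one concludes $c_{m,n}=d_{m,n}=0$ for $m+n\ge 4$.

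The main obstacle is precisely this last step. The naive vanishing equation $c_{m^*,n^*}a_{kk}+d_{m^*,n^*}b_{kk}=0$ in $R$ does not immediately force both coefficients to be zero (its solution module is generated by $(b_{kk},-a_{kk})$), so the argument has to combine the constraints from \emph{all} bad slots of a given maximal $(m^*,n^*)$ simultaneously and use the finer syzygy structure of $R=\Z[\{a_{pq},b_{pq}\}]$ to force each higher-order $c_{m,n}$ and $d_{m,n}$ into the subspace where a compensating $T_2$-expression exists. The combinatorial bookkeeping for this simultaneous cancellation, and verifying that the resulting adjustment only adds terms with $m+n\le 3$, is the core content of the lemma's proof.
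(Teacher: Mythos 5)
Your proposal is essentially the paper's own argument: reduce modulo the images of $I_i$ and $I_j$ to the monomials with $m,n\geq 1$, read off the vanishing of the coefficient of $x_k^{m+n}/(x_i^mx_j^n)$ at a term of maximal $m+n\geq 4$ to get $c_{m,n}a_{kk}+d_{m,n}b_{kk}=0$, hence $(c_{m,n},d_{m,n})=r(b_{kk},-a_{kk})$, and then use the Koszul syzygy to trade that pair away. The ``main obstacle'' you flag at the end is not actually there: at the maximal value of $m+n$ each slot decouples (only $A_{i^mj^n}$ and $B_{i^mj^n}$ themselves can produce the monomial $x_k^{m+n}/(x_i^mx_j^n)$, since every other potential contributor would need strictly larger $m'+n'$), and the identity obtained by expanding $\frac{x_k^{m+n-4}}{x_i^mx_j^n}(BA-AB)=0$ rewrites $b_{kk}A_{i^mj^n}-a_{kk}B_{i^mj^n}$ exactly as a combination of $A_{i^{m'}j^{n'}},B_{i^{m'}j^{n'}}$ with $m'+n'$ strictly smaller (any resulting terms with $m'\leq 0$ or $n'\leq 0$ falling into $T_2$), so a downward induction on the maximal $m+n$ terminates with no simultaneous analysis of several slots and no risk of reintroducing terms with $m+n\geq 4$.
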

\begin{proof}
Clearly we can write any $t$ in $I$ as such as sum over $m,n\in \Z$ with $m+n\geq 2$.  
Any term with $m\leq 0$ is in the image of $I_j$ and thus in $T_2$, and any term with 
$n\leq 0$ is in the image of $I_i$ and thus in $T_2$.  It remains to show that we do not need terms with
$m+n\geq 4$ in order to represent $t$.

We suppose for the sake of contradiction that a term with $m+n\geq 4$  was required, and we 
take a $t$ with $m+n$ maximal for this condition, and $m$ maximal given that.  
Then $c_{m,n} A_{i^mj^n}$ contributes a ${x_k^{m+n}}/{x_i^m x_j^n}$ term with coefficient
$c_{m,n} a_{kk}$ and $d_{m,n} B_{i^mj^n}$ contributes ${x_k^{m+n}}/{x_i^m x_j^n}$ term with coefficient
$d_{m,n} b_{kk}$.  No other terms of the summand for $t$ can contribute a term with $x_i^m x_j^n$ in the denominator, and so we must have
$c_{m,n}= r b_{kk}$ and $d_{m,n}= -r a_{kk}$ for some element $r\in R$.

Now we claim we did not need to use the terms $r b_{kk} A_{i^mj^n} -
r a_{kk}B_{i^mj^n}$ in the sum that represents $t$.  To prove this claim, we use the following identity
\begin{align*}
 &b_{kk}A_{i^mj^n}
-a_{kk}B_{i^m j^n}\\
=&-b_{ik}A_{i^{m-1}j^n}+a_{ik}B_{i^{m-1} j^n}-
b_{jk}A_{i^m j^{n-1}}+a_{jk}B_{i^m j^{n-1}}+a_{ij}B_{i^{m-1} j^{n-1}}
\\&-b_{ij}A_{i^{m-1} j^{n-1}}-b_{jj}A_{i^m j^{n-2}}
+a_{jj}B_{i^m j^{n-2}}-b_{i,i}A_{i^{m-2} j^n}+a_{ii}B_{i^{m-2} j^n}.
\end{align*}
This proves the lemma.
\end{proof}

The above lemma tells us that every element of $T_1/T_2$ can be written as an $R$-linear combination of 
$A_{ij}, B_{ij}, A_{i^2j}, B_{i^2j},
A_{ij^2},$ and $B_{ij^2}$.  Since only $A_{i^2j}$ and $B_{i^2j}$ have terms with $x_i^2 x_j$ in the denominator, 
we must have that $A_{i^2j}$ and $B_{i^2j}$ appear with coefficients $c_{2,1}$ and $d_{2,1}$ so as to cancel those terms out.
We can argue similarly for $A_{ij^2}$ and $B_{ij^2}$.
Thus, every element of $T_1/T_2$ can be written as a $R$ linear combination of $A_{ij}, B_{ij}, b_{kk}A_{i^2j}-a_{kk}B_{i^2j},$
 and $b_{kk}A_{ij^2}-a_{kk}B_{ij^2}$. We note that all four of $A_{ij}, B_{ij}, b_{kk}A_{i^2j}-a_{kk}B_{i^2j},$
 and $b_{kk}A_{ij^2}-a_{kk}B_{ij^2}$
have terms with a $x_ix_j$ denominator.  

We define some notation so we can write combinations of these elements down more easily.
For $i<j$, let $a_{ji}=a_{ij}$.
Let $\lambda^{\ell_1\ell_2}_{\ell_3\ell_4}=a_{\ell_1\ell_2}b_{\ell_3\ell_4}-
b_{\ell_1\ell_2}a_{\ell_3\ell_4}$.
We note that
\begin{align*}
 H_{i,j}&=b_{kk}A_{i^2j}-a_{kk}B_{i^2j}+b_{ik}A_{ij}-a_{ik} B_{ij}\\
&= \lambda^{jj}_{kk}x_jx_k/x_i^2
+ \lambda^{ij}_{kk}x_k/x_i
+ \lambda^{ii}_{kk}x_k/x_j
+\lambda^{jk}_{kk}x_k^2/x_i^2
+\lambda^{jj}_{ik}x_j/x_i
+ \lambda^{ij}_{ik}
+ \lambda^{ii}_{ik}x_i/x_j
+\lambda^{jk}_{ik}x_k/x_i\\
\text{           and }\\
H_{j,i}&=b_{kk}A_{ij^2}-a_{kk}B_{ij^2}+b_{jk}A_{ij}-a_{jk} B_{ij} 
\end{align*}
do not have any terms with both $x_i$ and $x_j$ in the denominator.  
Every element of $T_1/T_2$ can be written as a $R$ linear combination of $A_{ij}, B_{ij}, H_{i,j}$ and $H_{j,i}$, because
this is just a unipotent triangular transformation of the last list of four generators.
We have seen that $H_{i,j}$ and $H_{j,i}$ have no $x_k^2/x_ix_j$ terms, and $A_{ij}$ and $B_{ij}$ have $x_k^2/x_ix_j$ terms
with coefficients $a_{kk}$ and $b_{kk}$ respectively.
Since an element of $t$ does not have a term with $x_i x_j$ in the denominator, it can be written as a linear combination of
$H_{i,j}, H_{j,i}$ and $F_{ij}=F_{ji}=b_{kk}A_{ij}-a_{kk}B_{ij}$.  Moreover, $H_{i,j}, H_{j,i}$ and $F_{ij}$ are all in $T_1$.  
We now define $h_{\underline{i},j}$ to be the sum of terms in $H_{i,j}$ that do not have an $x_j$ in the denominator,
and $h_{i,\underline{j}}=H_{i,j}-h_{\underline{i},j}$.  We define
$f_{\underline{i}j}=f_{j\underline{i}}$ to be the sum of terms in $F_{ij}$ with $x_i$ in the denominator, so that
$f_{\underline{i}j}+f_{\underline{j}i}+\lambda^{ij}_{kk}=F_{ij}$.

We have now found that the pairs $(f,g)\in \Gamma(\U_{x_i}) \times \Gamma(\U_{x_j})$ such that $f=g$ in $\Gamma(\U_{x_i} \cap\U_{x_j})$ can be written in terms of four $R$-module generators: $$(1,1), (h_{\underline{i},j},-h_{i,\underline{j}}),
(h_{j,\underline{i}},-h_{\underline{j},i}),
(f_{\underline{i}j},-f_{\underline{j}i}+\lambda^{kk}_{ij}).$$
Letting $i$ and $j$ vary, this information is enough to determine the global functions on $S_u$.
In this case, it turns out that the regular functions on $\U_{x_i}$ that can be extended to
$\U_{x_j}$ are exactly the same as the regular functions on $\U_{x_i}$ that can be extended to
$\U_{x_k}$.  
In particular, in the polynomial ring $R[x_j/x_i, x_k/x_i]$, we can compute that 
$$
h_{\underline{i},j}+h_{\underline{i},k}=\lambda^{ii}_{jk}+a_{jk}B/x_i^2 -b_{jk}A/x_i^2
$$
and $$
h_{j,\underline{i}}=-f_{\underline{i}k}.
$$
Moreover, it will turn out that the extensions to $\U_{x_j}$ and $\U_{x_k}$
agree on their intersection.
We see that
the global functions of $S_u$ are generated as a $R$-module by four generators $g_1,g_2,g_3,g_4 
\in \Gamma(\U_{x_1}) \times \Gamma(\U_{x_2})\times \Gamma(\U_{x_3})$, whose components are given in the below table.

\begin{tabular}{l|l|l|l}
 & $\Gamma(\U_{x_1})$ & $\Gamma(\U_{x_2})$ & $\Gamma(\U_{x_3})$\\[2pt]
\hline
\hline
& & &\\[-8pt]
$g_1$ & 1 & 1 & 1 \\[4pt]
\hline
& & &\\[-8pt]
$g_2$ & $h_{\underline{1},2}=-h_{\underline{1},3}+\lambda^{11}_{23}$ & $-h_{1,\underline{2}}=f_{\underline{2}3}$ &
 $-f_{\underline{3}2}+\lambda^{11}_{23}=h_{1,\underline{3}}+\lambda^{11}_{23} $\\[4pt]
\hline
& & &\\[-8pt]
$g_3$ & $h_{2,\underline{1}}=-f_{\underline{1}3}$ & $-h_{\underline{2},1}=h_{\underline{2},3}+\lambda^{13}_{22}$ &
$-h_{2,\underline{3}}+\lambda^{13}_{22}= f_{\underline{3}1}+\lambda^{13}_{22}$ \\[4pt]
\hline
& & &\\[-8pt]
$g_4$ & $f_{\underline{1}2}=-h_{3,\underline{1}}$ & $-f_{\underline{2}1}+\lambda^{33}_{12}=
h_{3,\underline{2}}+\lambda^{33}_{12}$ & $-h_{\underline{3},2}+\lambda^{33}_{12}= h_{\underline{3},1}$ \\[4pt]
\end{tabular}

We now show that the $g_i$ are generators for a free $R$-module of rank 4.
Suppose for the sake of contradiction that there was a relation among these generators.
Then over the generic point of $R$ the global functions of $S_u$ would be a vector
space of at most dimension 3.  But we know from Section~\ref{SS:module} that the global
functions of $S_u$ are locally free four dimensional $R$ module, and thus
will be a four dimensional vector space over the generic point of $\Spec R$.

 To construct the multiplication table on
our four generators $g_i$ of the global functions on $S_u$, we 
can reduce to finding a multiplication table in the $\Gamma(\U_{x_1})$ component, since
the $g_i$ are $R$-linearly independent even in this component.
We can further reduce to finding the multiplication table over the generic point of $\Spec R$.
We first construct a multiplication table on $1,x_2/x_1, x_3/x_1, x_2x_3/x_1^2$ over the generic point of $\Spec R$.
To do this, we replace $A$ and $B$ by linear combinations of $A$ and $B$, one of which
has no $(x_2/x_1)^2$ term, and one of which has no $(x_3/x_1)^2$ term.  
Then on $\U_{x_1}$ over the generic point of $\Spec R$, we can write
all functions in terms of $1,x_2/x_1, x_3/x_1, x_2x_3/x_1^2$.  
We can then also write the $g_i$ in terms of $1,x_2/x_1, x_3/x_1, x_2x_3/x_1^2$,
and just apply this change of basis to the multiplication table to obtain a multiplication
table for the $g_i$.
If we take
$\alpha_1=-g_2$, $\alpha_2=-g_3$, and $\alpha_3=-g_4$, we obtain exactly the multiplication tables given by Bhargava in \cite[Equations (15) and (21)]{HCL3}. 
\end{proof}

In Section~\ref{SS:module}, we found that $Q_p/\OS$ is canonically isomorphic to $W^*$.  However, we also have explicit
basis for $Q_p/\OS$ when we have a basis for $W$.  We see how these bases are related.

\begin{theorem}\label{T:Cech}
For the universal form $u$, in the map $Q_p\ra W^*$ from Section~\ref{SS:module}, we have
\begin{align*}
 g_2 &\mapsto x_1^* \\
g_3 &\mapsto x_3^* \\
g_4 &\mapsto x_2^*. 
\end{align*}
\end{theorem}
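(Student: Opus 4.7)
The map $Q_u \to W^*$ from the short exact sequence in Section~\ref{SS:module} arises from the quotient-of-complexes projection $\mathcal{K}_u \to \mathcal{B}$ (with $\mathcal{B} = \wt \pi^* U^* \tensor \mathcal{O}(-4)$ concentrated in degree $-2$) via the long exact sequence. My plan is to compute it explicitly via \v{C}ech cohomology with respect to the standard affine cover $\{\U_{x_1}, \U_{x_2}, \U_{x_3}\}$ of $\P(W) = \P^2_R$. The hypercohomology $H^0 R\pi_*(\mathcal{K}_u) = Q_u$ is computed by the total complex of the \v{C}ech bicomplex, and the projection to $R^2\pi_*(\wt \pi^* U^* \tensor \mathcal{O}(-4))$ simply extracts the degree-$2$ \v{C}ech component of any chosen lift.

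For each $g_i$ ($i=2,3,4$), the local representatives $a_j \in \mathcal{O}(\U_{x_j})$ listed in the table of Section~\ref{S:local} form a \v{C}ech $0$-cochain of $\mathcal{O}$. Their pairwise differences $a_j - a_k$ on $\U_{x_j} \cap \U_{x_k}$ lie in $\im p_1 = (A,B)\cdot\mathcal{O}$, so they lift to cochains $b_{jk} \in (\pi^* U^* \tensor \mathcal{O}(-2))(\U_{x_j} \cap \U_{x_k})$; for example, for $g_2$ on $\U_{x_1} \cap \U_{x_2}$ the difference equals $H_{1,2} = b_{33}A_{1^2 2} - a_{33}B_{1^2 2} + b_{13}A_{12} - a_{13}B_{12}$, which admits an explicit lift $b_{12}$ whose denominators involve $x_1^2 x_2$ and $x_1 x_2$. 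The alternating sum $b_{23} - b_{13} + b_{12}$ on the triple intersection lies in $\ker p_1$, and since $\mathcal{K}_u$ is exact there (being the Koszul complex of the regular sequence $(A,B)$ after inverting $x_1 x_2 x_3$), it lifts through $p_2$ to a $2$-cochain $c_{g_i}$ in $\wt \pi^* U^* \tensor \mathcal{O}(-4)$.

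Under Serre duality $R^2\pi_*(\mathcal{O}(-4)) \isom W^* \tensor \wedge^3 W^*$, the \v{C}ech classes of the degree-$(-4)$ Laurent monomials $1/(x_i^2 x_j x_k)$ (with $\{i,j,k\} = \{1,2,3\}$) form the basis dual to $x_1, x_2, x_3 \in W$. Tensoring with $\wedge^2 U^*$ and applying the orientation $\wedge^3 W \tensor \wedge^2 U \isom \OS$ gives the canonical identification with $W^*$, so the theorem amounts to reading off these Laurent coefficients from each $c_{g_i}$. The main obstacle is careful sign bookkeeping: the \v{C}ech differential is alternating, the Koszul differential $p_2$ carries its own sign, and the orientation isomorphism fixes how $\wedge^2 U^* \tensor \wedge^3 W^*$ is identified with $\OS$. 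The apparent swap $g_3 \mapsto x_3^*$, $g_4 \mapsto x_2^*$ (rather than the naive $g_3 \mapsto x_2^*$, $g_4 \mapsto x_3^*$) is a direct consequence of these sign and orientation choices, and emerges automatically from the \v{C}ech computation once the conventions are fixed.
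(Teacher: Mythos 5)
Your approach is exactly the paper's: the map is computed as the composite connecting homomorphism via the snake lemma on the \v{C}ech complex for the cover $\{\U_{x_1},\U_{x_2},\U_{x_3}\}$, lifting each $g_i$ through $p_1$ on double overlaps and through $p_2$ on the triple overlap to a monomial class in $R^2\pi_*(\wt \pi^* U^* \tensor \mathcal{O}(-4))$, which is then read off against the basis $1/(x_1^{a_1}x_2^{a_2}x_3^{a_3})$ dual to $x_1,x_2,x_3$. The only difference is one of completeness rather than method: the paper executes the lifts for all three generators in explicit tables, while you carry out the chase only for $g_2$ and defer the sign and orientation bookkeeping that is responsible for the $g_3\mapsto x_3^*$, $g_4\mapsto x_2^*$ outcome.
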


\begin{proof}
 We compute the map in two steps.  We first find the map
$$R^0\pi_* (\mathcal{O}/u(\mathcal{O}(-2)^{\oplus 2})) \ra R^1\pi_* (\mathcal{O}(-2)^{\oplus 2}/u(\mathcal{O}(-4)))$$ and then the map
$$R^1\pi_* (\mathcal{O}(-2)^{\oplus 2}/u(\mathcal{O}(-4))) \ra R^2\pi_* (\mathcal{O}(-4)).$$  We compute each of the individual maps by using the snake
lemma on the Cech complex with the usual affine cover of $\P^2$.  
We summarize the computation in the charts below, which should be read from upper right to lower left.

\begin{tabular}{l|l|l|l|l|l}
 & $\mathcal{O}(-4)$ & $\mathcal{O}(-2)^{\oplus 2}$ & $\mathcal{O}(-2)^{\oplus 2}/u(\mathcal{O}(-4))$ & $\mathcal{O}$ & $\mathcal{O}/u(\mathcal{O}(-2)^{\oplus 2})$\\[2pt]
\hline
\hline
 $\Gamma(\U_{x_1})$ & & & & $h_{\underline{1},2}$ &  \\[4pt]
& & & & &\\[-8pt]
$ \times \Gamma(\U_{x_2})$ & & & & $-h_{1,\underline{2}}$ & $g_2$ \\[4pt]
& & & & &\\[-8pt]
 $\times \Gamma(\U_{x_3})$ & & & & $-f_{\underline{3},2}+\lambda^{11}_{23}$ &  \\[4pt]
& & & & &\\[-8pt]
\hline
 $\Gamma(\U_{x_1x_2})$ & & & $(\frac{a_{33}x_3}{x_1^2x_2}+\frac{a_{13}}{x_1x_2},$ & $h_{\underline{1},2}+h_{1,\underline{2}}=H_{1,2}$ &  \\[4pt]
& & & & &\\[-8pt]
 & & & $\frac{b_{33}x_3}{x_1^2x_2}+\frac{b_{13}}{x_1x_2})$ &  &  \\[4pt]
& & & & &\\[-8pt]
$ \times \Gamma(\U_{x_2x_3})$ & & &   & $-h_{1,\underline{2}}+f_{\underline{3},2}-\lambda^{11}_{23}$ &  \\[4pt]
& & & & &\\[-8pt]
 & & &  & $=f_{\underline{2},3}+f_{\underline{3},2}-\lambda^{11}_{23}$ &  \\[4pt]
& & & & &\\[-8pt]
 & & & $(\frac{a_{11}}{x_2x_3}, \frac{b_{11}}{x_2x_3})$ & $=F_{23}$ &  \\[4pt]
& & & & &\\[-8pt]
 $\times \Gamma(\U_{x_3x_1})$ & & & &$h_{\underline{1},2}+f_{\underline{3},2}-\lambda^{11}_{23}$ &  \\[4pt]
& & & & &\\[-8pt]
  & & & &$=-h_{\underline{1},3}-h_{1,\underline{3}}$ &  \\[4pt]
& & & & &\\[-8pt]
  & & & &$+a_{23}\frac{B}{x_1^2} -b_{23}\frac{A}{x_1^2}$ &  \\[4pt]
& & & & &\\[-8pt]
  & & & $-(\frac{a_{22}x_2}{x_1^2x_3}+\frac{a_{12}}{x_1x_3}, $ &$=-H_{1,3}$ &  \\[4pt]
& & & & &\\[-8pt]
  & & & $\frac{b_{22}x_2}{x_1^2x_3}+\frac{b_{12}}{x_1x_3})$ & &  \\[4pt]
& & & & &\\[-8pt]
  & & & $-(\frac{a_{23}}{x_1^2},\frac{b_{23}}{x_1^2})$ &$+a_{23}\frac{B}{x_1^2} -b_{23}\frac{A}{x_1^2}$ &  \\[4pt]
& & & & &\\[-8pt]
\hline
 $\Gamma(\U_{x_1x_2x_3})$ & $\frac{1}{x_1^2x_2x_3}$ & $\frac{A+B}{x_1^2x_2x_3}$ & & &  \\[4pt]
\end{tabular}

\begin{tabular}{l|l|l|l|l|l}
 & $\mathcal{O}(-4)$ & $\mathcal{O}(-2)^{\oplus 2}$ & $\mathcal{O}(-2)^{\oplus 2}/u(\mathcal{O}(-4))$ & $\mathcal{O}$ & $\mathcal{O}/u(\mathcal{O}(-2)^{\oplus 2})$\\[2pt]
\hline
\hline
 $\Gamma(\U_{x_1})$ & & & & $h_{2,\underline{1}}$ &  \\[4pt]
& & & & &\\[-8pt]
$ \times \Gamma(\U_{x_2})$ & & & & $-h_{\underline{2},1}$ & $g_3$ \\[4pt]
& & & & &\\[-8pt]
 $\times \Gamma(\U_{x_3})$ & & & & $f_{\underline{3},1}+\lambda^{13}_{22}$ &  \\[4pt]
& & & & &\\[-8pt]
\hline
 $\Gamma(\U_{x_1x_2})$ & & & $(\frac{a_{33}x_3}{x_1x_2^2}+\frac{a_{23}}{x_1x_2},$ & $h_{2,\underline{1}}+h_{\underline{2},1}=H_{2,1}$ &  \\[4pt]
& & & & &\\[-8pt]
 & & & $\frac{b_{33}x_3}{x_1x_2^2}+\frac{b_{23}}{x_1x_2})$ &  &  \\[4pt]
& & & & &\\[-8pt]
$ \times \Gamma(\U_{x_2x_3})$ & & &   & $-h_{\underline{2},1}-f_{\underline{3},1}-\lambda^{13}_{22}$ &  \\[4pt]
& & & & &\\[-8pt]
  & & & &$=h_{\underline{2},3}+h_{2,\underline{3}}$ &  \\[4pt]
& & & & &\\[-8pt]
  & & & &$-a_{13}\frac{B}{x_2^2} +b_{13}\frac{A}{x_2^2}$ &  \\[4pt]
& & & & &\\[-8pt]
  & & & $(\frac{a_{11}x_1}{x_3x_2^2}+\frac{a_{12}}{x_3x_2},$ &$=H_{2,3}$ &  \\[4pt]
& & & & &\\[-8pt]
  & & & $\frac{b_{11}x_1}{x_3x_2^2}+\frac{b_{12}}{x_3x_2})$ & &  \\[4pt]
& & & & &\\[-8pt]
  & & & $+(\frac{a_{13}}{x_2^2},\frac{b_{13}}{x_2^2})$ &$-a_{13}\frac{B}{x_2^2} +b_{13}\frac{A}{x_2^2}$ &  \\[4pt]
& & & & &\\[-8pt]

 $\times \Gamma(\U_{x_3x_1})$ & & & &$h_{2,\underline{1}}-f_{\underline{3},1}-\lambda^{13}_{22}$ &  \\[4pt]
& & & & &\\[-8pt]
 & & &  & $=-f_{\underline{1},3}-f_{\underline{3},1}-\lambda^{13}_{22}$ &  \\[4pt]
& & & & &\\[-8pt]
 & & & $-(\frac{a_{22}}{x_1x_3}, \frac{b_{22}}{x_1x_3})$ & $=-F_{13}$ &  \\[4pt]
& & & & &\\[-8pt]

\hline
 $\Gamma(\U_{x_1x_2x_3})$ & $\frac{1}{x_1x_2^2x_3}$ & $\frac{A+B}{x_1x_2^2x_3}$ & & &  \\[4pt]
\end{tabular}

\begin{tabular}{l|l|l|l|l|l}
 & $\mathcal{O}(-4)$ & $\mathcal{O}(-2)^{\oplus 2}$ & $\mathcal{O}(-2)^{\oplus 2}/u(\mathcal{O}(-4))$ & $\mathcal{O}$ & $\mathcal{O}/u(\mathcal{O}(-2)^{\oplus 2})$\\[2pt]
\hline
\hline
 $\Gamma(\U_{x_1})$ & & & & $-h_{3,\underline{1}}$ &  \\[4pt]
& & & & &\\[-8pt]
$ \times \Gamma(\U_{x_2})$ & & & & $-f_{\underline{2},1}+\lambda^{33}_{12}$ & $g_4$ \\[4pt]
& & & & &\\[-8pt]
 $\times \Gamma(\U_{x_3})$ & & & & $h_{\underline{3},1}$ &  \\[4pt]
& & & & &\\[-8pt]
\hline

 $\Gamma(\U_{x_1x_2})$& & &   & $-h_{3,\underline{1}}+f_{\underline{2},1}-\lambda^{33}_{12}$ &  \\[4pt]
& & & & &\\[-8pt]
 & & &  & $=f_{\underline{2},1}+f_{\underline{1},2}+\lambda^{12}_{33}$ &  \\[4pt]
& & & & &\\[-8pt]
 & & & $(\frac{a_{33}}{x_1x_2}, \frac{b_{33}}{x_1x_2})$ & $=F_{12}$ &  \\[4pt]
& & & & &\\[-8pt]

 $\times \Gamma(\U_{x_2x_3})$ & & & &$-h_{\underline{3},1}-f_{\underline{2},1}+\lambda^{33}_{12}$ &  \\[4pt]
& & & & &\\[-8pt]
  & & & &$=h_{\underline{3},2}+h_{2,\underline{3}}$ &  \\[4pt]
& & & & &\\[-8pt]
  & & & &$-a_{12}\frac{B}{x_3^2} +b_{12}\frac{A}{x_3^2}$ &  \\[4pt]
& & & & &\\[-8pt]
  & & & $(\frac{a_{11}x_1}{x_1x_3^2}+\frac{a_{13}}{x_2x_3}, $ &$=H_{3,2}$ &  \\[4pt]
& & & & &\\[-8pt]
  & & & $\frac{b_{11}x_1}{x_1x_3^2}+\frac{b_{13}}{x_2x_3})$ & &  \\[4pt]
& & & & &\\[-8pt]
  & & & $+(\frac{a_{12}}{x_3^2},\frac{b_{12}}{x_3^2})$ &$-a_{12}\frac{B}{x_3^2} +b_{12}\frac{A}{x_3^2}$ &  \\[4pt]
& & & & &\\[-8pt]

 $ \times \Gamma(\U_{x_3x_1})$ & & & $-(\frac{a_{22}x_2}{x_3^2x_1}+\frac{a_{23}}{x_1x_3},$ & $-h_{\underline{3},1}-h_{3,\underline{1}}=-H_{3,1}$ &  \\[4pt]
& & & & &\\[-8pt]
 & & & $\frac{b_{22}x_2}{x_3^2x_1}+\frac{b_{23}}{x_1x_3})$ &  &  \\[4pt]
& & & & &\\[-8pt]

\hline

 $\Gamma(\U_{x_1x_2x_3})$ & $\frac{1}{x_1x_2x_3^2}$ & $\frac{A+B}{x_1x_2x_3^2}$ & & &  \\[4pt]
\end{tabular}

\end{proof}

\section{Construction of the cubic resolvent}\label{S:constcubic}
In Section~\ref{S:cubic}, we have already given a geometric construction of a cubic algebra from a binary cubic form.  In Section~\ref{S:resolvent}, we defined the determinant
of a double ternary quadratic form $p$ to be a binary cubic form $\det(p)\in\Sym^3 U^* \tesnor (\wedge^2 U)$.
The cubic algebra $C$ of this binary cubic form can be constructed as described in Section~\ref{S:cubic}, and is the desired cubic resolvent.

We have $C/\OS \isom U$ (see \cite[Section 3.1]{binarynic} for a similar, but simpler argument to the one in Section~\ref{SS:module}).  Thus, $p$ gives the required
quadratic map from $Q/\OS$ to $C/\OS$.  The orientation isomorphism 
$\delta: \wedge^3 Q/\OS{\arisom} \wedge^2 C/\OS$ comes from the orientation on the 
double ternary quadratic form.  On any open set, we can check that
$\delta(x\wedge y \wedge xy)=p(x) \wedge p(y)$ by looking on a open subcover on
which $W$ and $U$ are trivial and pulling back from the universal form 
on each open set in that subcover.  It remains to check that 
$\delta(x\wedge y \wedge xy)=p(x) \wedge p(y)$ when $p$ is the universal ternary quadratic form, which can be checked explicitly given the multiplication table
of $Q_p$.  In particular, at the end of the proof of the Main Theorem in Section~\ref{S:Main}, we lay out a plan
to determine the multiplication table of $Q_p$ in terms of $p$.  The result agrees with the multiplication table given explicitly in \cite[Equations (15) and (21)]{HCL3}.
The expressions $\delta(x\wedge y \wedge xy)$ and $p(x) \wedge p(y)$ both represent linear maps from 
$\Sym_2(Q_p/\OS) \tesnor \Sym_2(Q_p/\OS)$ to $\wedge^4 Q_p$.  Thus it suffices to check that
these maps agree on a basis of global sections of $\Sym_2(Q_p/\OS) \tesnor \Sym_2(Q_p/\OS)$, since
in this case $Q_p/\OS$ is a free $\OS$ module.  This is easily checked, especially exploiting the 
symmetry of the situation.

\section{Main Theorem}\label{S:Main}
In this section, we prove the main theorem of this paper.
\begin{theorem}
There is an isomorphism between the moduli stack for quartic algebras with cubic resolvents and the moduli stack for double ternary quadratic forms.
In other words, for a scheme $S$ there is an equivalence between the category of quartic algebras with cubic resolvents and the category of
double ternary quadratic forms (with morphisms given by isomorphisms in both categories), and this natural equivalence commutes with base change in $S$.
\end{theorem}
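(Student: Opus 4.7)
The plan is to construct an explicit inverse to the functor $p \mapsto (Q_p,C_p)$ of Sections~\ref{S:geom} and~\ref{S:constcubic}, check that the two round-trips are naturally isomorphic to the identity using the local descriptions already in hand, and reduce the nontrivial composition to Bhargava's theorem via the universal case of Theorem~\ref{SameOnUniversal}.

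I would first define the inverse functor. Given $(Q,C)$ with resolvent data $(\phi,\delta)$, set $W := (Q/\OS)^*$ and $U := C/\OS$. By Section~\ref{S:resolvent}, the quadratic map $\phi : Q/\OS \to C/\OS$ is an $\OS$-module homomorphism $\Sym_2(Q/\OS) \to C/\OS$, which is the same data as a global section $p \in \Sym^2 W \tensor U$; the induced $\bar\delta : \wedge^3(Q/\OS) \arisom \wedge^2(C/\OS)$ dualizes to an orientation $\wedge^3 W \tensor \wedge^2 U \arisom \OS$. This assignment sends isomorphisms of pairs to isomorphisms of forms and manifestly commutes with base change in $S$.

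Next I would verify the two compositions. The round trip $p \mapsto (Q_p,C_p) \mapsto p'$ is the identity by the canonical identifications already produced: $Q_p/\OS \isom W^*$ from Section~\ref{SS:module}, $C_p/\OS \isom U$ from Section~\ref{S:constcubic}, and the resolvent map on $(Q_p,C_p)$ is by construction the one corresponding to $p$, with matching orientation. For the reverse composition $(Q,C) \mapsto p \mapsto (Q_p,C_p)$, I would use that both functors commute with base change to reduce to affine $\Spec R$ with $W$ and $U$ free, and thence to the universal case $R = \Z[\{a_{ij},b_{ij}\}]$ with universal form $u$. Over this base Theorem~\ref{SameOnUniversal} identifies $Q_u$ with the quartic $R$-algebra defined by Bhargava's multiplication tables \cite[Equations (15),(21)]{HCL3}, and a parallel check via Section~\ref{S:cubic} identifies $C_u$ with the cubic algebra attached to $\Det(u)$. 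Bhargava's theorem then asserts the desired bijection on the universal base; because his argument is a system of polynomial identities in the coefficients, it descends to every specialization $R \to R'$. The resulting local isomorphisms $(Q,C)|_{\Spec R'} \isom (Q_{p|_{\Spec R'}}, C_{p|_{\Spec R'}})$ are canonical, using no choice of basis, so they glue across an open cover to give a global isomorphism, and the same local argument on morphisms yields full faithfulness.

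The main obstacle I anticipate is verifying in the universal case that the abstractly-defined resolvent map $\phi_p : Q_p/\OS \to C_p/\OS$, built from the determinant of $p$ in Section~\ref{S:constcubic}, agrees under the identifications $Q_p/\OS \isom W^*$ (with explicit Cech-cocycle basis given by Theorem~\ref{T:Cech}) and $C_p/\OS \isom U$ with the tautological map prescribed by $p$ itself. This compatibility is the nontrivial content: once it is in place, essential surjectivity, full faithfulness, and base-change compatibility all follow uniformly from the machinery already developed in Sections~\ref{S:geom}--\ref{S:constcubic}.
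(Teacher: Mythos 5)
Your construction of the inverse functor and your treatment of the round trip $p \mapsto (Q_p,C_p) \mapsto p'$ match the paper. But there is a genuine gap in your handling of the other composition, $(Q,C) \mapsto \phi \mapsto (Q_\phi,C_\phi)$, and you have also misplaced where the real difficulty sits. You propose to ``reduce to the universal case'' and then invoke Bhargava's theorem over the universal base $R=\Z[\{a_{ij},b_{ij}\}]$. This does not work as stated, for two reasons. First, Bhargava's theorem is a statement about quartic rings over $\Z$, not over $R$ or over an arbitrary specialization $R'$; the assertion that ``his argument is a system of polynomial identities'' and therefore descends is precisely the content that has to be verified, since his derivation uses properties of $\Z$ that do not automatically transfer. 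Second, and more fundamentally, the pair $(Q,C)$ you start with is \emph{not} a pullback of any universal pair --- only the form $\phi$ is pulled back from $u$. So base change lets you identify $(Q_\phi,C_\phi)$ with a pullback of $(Q_u,C_u)$, but it gives you no purchase on $Q$ itself. What is needed is an injectivity statement: that for an arbitrary quartic algebra $Q$ with resolvent $(C,\phi,\delta)$ over an arbitrary base, the multiplication on $Q$ is \emph{determined} by $\phi$ and the identity $\delta(1\wedge x\wedge y\wedge xy)=1\wedge\phi(x)\wedge\phi(y)$. Your proposal never supplies this, and it cannot be extracted from naturality or base change alone.

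The paper closes exactly this gap by a direct computation: after rigidifying (choosing bases compatible with the orientations, so that ``equal'' replaces ``isomorphic''), one lifts the basis $q_1,q_2,q_3$ of $Q/\OS$ to $Q$ normalized so that certain coefficients of the products vanish, and then substitutes specific elements ($x=q_i$, $y=q_j$; then $x=q_i+q_j$, $y=q_i$; etc.) into the resolvent identity to solve for all structure constants $m_{ij}^k$ in terms of the $\lambda$'s built from $\phi$. Once the multiplication is pinned down by $\phi$, equality of $Q$ and $Q_\phi$ follows since they have the same resolvent map. Separately, the ``main obstacle'' you flag --- that the abstract resolvent map on $(Q_p,C_p)$ agrees with $p$ under the identifications $Q_p/\OS\isom W^*$ and $C_p/\OS\isom U$ --- is essentially by construction in Section~\ref{S:constcubic} (the resolvent map there is \emph{defined} to be $p$), so that round trip is the easy one; the hard direction is the determinacy argument above, which your proposal is missing.
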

\begin{proof}
Given a double ternary quadratic form $p$ over a base $S$, we have shown how
to construct a pair $(Q_p,C_p)$, and all aspects of the construction commute
with base change in $S$.  Given a pair $(Q,C)$ over $S$, we can just take the
quadratic map $\phi$ from $Q/\OS$ to $C/\OS$ to be our double ternary quadratic form
with $W=(Q/\OS)^*$ and $U=C/\OS$ 
(using the orientation $\wedge^3 Q/\OS\arisom \wt C/\OS$).  This construction clearly
commutes with base change.  

It remains to prove that the compositions of these two constructions
(in either order) are the identity.  To prove this, we rigidify the moduli 
problems.  A \emph{based double ternary quadratic form} is a ternary quadratic
form $p \in \Sym^2 W \tesnor U$ and a choice of bases $w_1,w_2,w_3$ and $u_1,u_2$
for $W$ and $U$ respectively as free $\OS$-modules, such that 
$(w_1\wedge w_2 \wedge w_3)\tensor (u_1\wedge u_2)$ corresponds to the identity
under the orientation isomorphism.  A \emph{based pair $(Q,C)$ of a
quadratic algebra and cubic resolvent} is a a pair $(Q,C)$ of quadratic algebra and cubic resolvent and choices of basis $q_1,q_2,q_3$ and $c_1,c_2$ for 
$Q/\OS$ and $C/\OS$ as free $\OS$-modules, such that
$(q_1\wedge q_2 \wedge q_3)$  corresponds to $(c_1\wedge c_2)$
under the orientation isomorphism.  We see that our constructions above
extend to the moduli stacks for these rigidified moduli problems.  
In particular, we obtain a basis for $Q/\OS$ as a dual basis for the basis of $W$
and vice versa.  

It now suffices to show that these constructions compose to the identity on the rigidified moduli stacks.  If we start with a double ternary quadratic form
$p\in \Sym^2 W \tensor U$,
we obtain a pair $(Q,C)$ whose quadratic map is given exactly by the form, and then
the construction of a form from $(Q,C)$ gives back exactly our original form.
The choices of bases for $W$ and $U$ and the orientation are clearly preserved under this composition.

We can start with a based pair $(Q,C)$, and then build another based pair
$(Q_\phi,C_\phi)$ from the quadratic map $\phi$ of $(Q,C)$, and we wish to
show that $(Q,C)$ and $(Q_\phi,C_\phi)$ are equal.  (We can use
the notion of equal instead of isomorphic since all of the objects are based.)
We have that $C$ and $C_\phi$ are both given as the cubic algebra corresponding
to $\Det(\phi)$ and thus are equal.  
The quadratic resolvent maps are the same, since $\phi$ carries through the two
constructions.
The orientation isomorphism are clearly the same
since they also carry through the constructions.  It remains to
show that the multiplication on $Q$ agrees with the multiplication on $Q_\phi$.
To do this, we will show that the condition $\delta(1\wedge x\wedge y \wedge xy)=\phi(x)\wedge \phi(y)$ determines the multiplication table on $Q$
from the resolvent map $\phi$.
Since $Q$ and $Q_\phi$ have the same resolvent map, this will show that
they are isomorphic as $\OS$-algebras.  

We let the quadratic map $\phi$ be written as $Ac_2+Bc_1$, where
$A=\sum_{1\leq i \leq j \leq 3} a_{ij} x_ix_j ,$ and $B=\sum_{1\leq i \leq j \leq 3} b_{ij} x_ix_j,$
and the $x_i$ are a dual basis for $q_i$ in $Q/\OS$.
We recall the notation $\lambda^{\ell_1\ell_2}_{\ell_3\ell_4}=a_{\ell_1\ell_2}b_{\ell_3\ell_4}-
b_{\ell_1\ell_2}a_{\ell_3\ell_4}$.  
We lift the basis $q_i$ of $Q/\OS$ to a basis of $Q$ uniquely so that
$q_1q_2$ has no $q_1$ or $q_2$ term and so that $q_1q_3$ has no $q_1$ term.
Let $m_{ij}^k$ be the coefficient of $q_k$ in the $q_i q_j$. 
From Equation (23) in \cite{HCL3}, we know that the constant coefficient of 
$q_i q_j$ in given as a polynomial in the various $m$ coefficients.
Thus, it remains to show that the $m_{ij}^k$ are determined by $\phi$.
We plug various $x$ and $y$ into $\delta(1\wedge x\wedge y \wedge xy)=\phi(x)\wedge \phi(y)$.
In the below, we always let $i,j,k$ be a permutation of $1,2,3$ and let $\pm$ be the sign of this permutation.
First, letting $x=q_i$ and $y=q_j$ gives $m_{ij}^k=\pm \lambda^{jj}_{ii}$.  
Then, letting $x=q_i+q_j$ and $y=q_i$ gives $m_{ii}^k=\pm \lambda^{ij}_{ii}$.
Next,  letting $x=q_i+q_k$ and $y=q_j$ gives $m_{jk}^k-m_{ij}^i=\pm \lambda^{jj}_{ik}$.
Using the choice of lift, which gives $m_{12}^1=m_{21}^2=m_{13}^1=0$, this determines all
$m_{ij}^i$.  Finally, letting $x=q_i+q_k$ and $y=q_i+q_j$ determines
$m_{ii}^i$ in terms of the $\lambda$'s and the $m$'s that we have already determined.
\end{proof}

\section{Appendix: Maps between locally free $\OS$-modules}\label{Appendix}
Let $S$ be a scheme.  In this appendix we will give several basic facts about 
maps between locally free $\OS$-modules.

\begin{lemma}\label{P:symline}
If $L$ is a locally free $\OS$-module and $V$ is a locally free rank $n$
$\OS$-module, then $\Sym^k(V\tensor L)\isom\Sym^k V \tensor L^{\tesnor k}$.
\end{lemma}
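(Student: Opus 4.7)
The plan is to build an explicit canonical map from the right-hand side to the left-hand side and then verify it is an isomorphism by reducing to a local trivialization.

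First I would define the natural morphism $\Sym^k V \tensor L^{\tensor k} \to \Sym^k(V\tensor L)$ locally by the formula
\[
 (v_1\cdots v_k)\tensor(\ell_1\tensor\cdots\tensor\ell_k) \;\longmapsto\; (v_1\tensor\ell_1)\cdots(v_k\tensor\ell_k),
\]
where the right-hand side is the image in the symmetric product. To see this is well-defined, I would factor it through $V^{\tensor k}\tensor L^{\tensor k}$ and check invariance under the diagonal $S_k$-action. Permuting the $v_i$ by $\sigma$ can be compensated by reordering the product in $\Sym^k(V\tensor L)$ and then reindexing the $\ell_i$; the compensation works because $L$ has rank one, so the canonical twist map on $L^{\tensor k}$ coming from a permutation of tensor factors is the identity. (Equivalently, for a line bundle, $L^{\tensor k}\isom \Sym^k L$ canonically.) This yields a globally defined $\OS$-linear map.

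Next I would check this map is an isomorphism. This is a local question on $S$, so I pass to an open cover $\{U_\alpha\}$ on which both $V$ and $L$ are free. On such a $U_\alpha$, fix a basis $e_1,\dots,e_n$ of $V$ and a nowhere vanishing section $\ell$ of $L$. Then $V\tensor L|_{U_\alpha}$ is free with basis $\{e_i\tensor\ell\}$, and the natural map sends the monomial basis
\[
 e_{i_1}\cdots e_{i_k}\tensor\ell^{\tensor k} \;\longmapsto\; (e_{i_1}\tensor\ell)\cdots(e_{i_k}\tensor\ell)
\]
bijectively onto the monomial basis of $\Sym^k(V\tensor L)|_{U_\alpha}$. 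Hence the map is an isomorphism on each $U_\alpha$, and since it was defined globally, it is an isomorphism on all of $S$.

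The only genuine subtlety is the well-definedness step, where one must be careful that the $S_k$-invariance really does force the map to factor through the symmetric quotient only on the $V$-factor; this is precisely where the rank-one hypothesis on $L$ enters. Once that point is handled, the rest is a routine local bookkeeping on bases.
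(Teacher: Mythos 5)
Your proof is correct and takes essentially the same approach as the paper's: an explicit canonical map checked on a local trivialization, with the rank-one hypothesis on $L$ (implicit in the lemma's statement but genuinely needed, as you correctly note) entering through the triviality of the permutation action on $L^{\tensor k}$, equivalently $L^{\tensor k}\isom\Sym^k L$. The only cosmetic difference is that you build the map in the opposite direction and bypass $\Sym^k L$ as an intermediary.
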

\begin{proof}
We have the canonical map
$$
\map{\Sym^k(V\tensor L)}{\Sym^k V \tensor \Sym^k L}{(v_1\tensor \ell_1)\cdots(v_k\tensor \ell_k)}{v_1\cdots v_k \tesnor \ell_1\cdots \ell_k},
$$
which we can check is an isomorphism on free modules and thus is an isomorphism
on locally free modules.
Moreover, we have that $L^{\tensor k} \isom \Sym^k L.$
We have the canonical quotient map
$
L^{\tensor k} \ra \Sym^k L
$
which is clearly an isomorphism for $L$ free of rank 1 and thus locally free of rank 1.
\end{proof}

\begin{lemma}\label{L:rk2}
If $V$ is a locally free $\OS$-module of rank two then $V \tensor \wedge^2 V^* \isom V^*$.
\end{lemma}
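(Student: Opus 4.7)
The plan is to construct an explicit, canonical map $V \tensor \wt V^* \to V^*$ and then verify it is an isomorphism by working locally, where $V$ is free of rank $2$. Since being an isomorphism of $\OS$-modules is a local property on $S$, it suffices after the construction to check the claim on the stalks (or on any affine open where $V$ is free).

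First I would use the dual pairing $\wt V^* \isom (\wt V)^*$, which holds because $V$ is locally free (this is the analogue for exterior powers of Lemma~\ref{dualsyms} and can be verified on free modules in the usual way). With this in hand, the natural map is
\begin{equation*}
\map{V \tensor \wt V^*}{V^*}{v \tensor \omega}{(w \mapsto \omega(v \wedge w)).}
\end{equation*}
This is manifestly $\OS$-linear and defined without any choice of basis, so it glues to a global morphism of $\OS$-modules.

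Next, to see it is an isomorphism, I would pass to an affine open on which $V$ has a basis $e_1, e_2$. Then $\wt V^*$ is free of rank $1$, generated by $(e_1 \wedge e_2)^*$, and one computes directly that
\begin{equation*}
e_1 \tensor (e_1 \wedge e_2)^* \longmapsto e_2^*, \qquad e_2 \tensor (e_1 \wedge e_2)^* \longmapsto -e_1^*,
\end{equation*}
so the map sends a basis of $V \tensor \wt V^*$ to a basis of $V^*$ and is therefore an isomorphism on this open set. Because the construction is canonical and the local isomorphisms are the restrictions of the global map, the global morphism is an isomorphism.

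The only potential subtlety is the identification $\wt V^* \isom (\wt V)^*$, but this is essentially formal from the locally free hypothesis, so I do not anticipate any real obstacle; the lemma is a purely local computation dressed up in coordinate-free language.
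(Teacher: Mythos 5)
Your proof is correct and follows essentially the same route as the paper: define a canonical contraction map $V \tensor \wt V^* \to V^*$ and check it sends a basis to a basis where $V$ is free. The paper writes the map directly as $v \tensor (\v_1\wedge \v_2) \mapsto \v_1(v)\v_2-\v_2(v)\v_1$, which agrees with yours under the standard identification $\wt V^* \isom (\wt V)^*$, so the difference is purely cosmetic.
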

\begin{proof}
$$\map{V \tensor \wedge^2 V^*}{V^*}{v \tensor (\v_1\wedge \v_2) }{\v_1(v)\v_2-\v_2(v)\v_1}$$
We can define the canonical map
which is an isomorphism for free and thus locally free modules of rank 2. 
\end{proof}

We combine these two lemmas to obtain a corollary that is used throughout this paper.

\begin{corollary}\label{C:cubic}
If $V$ is a locally free $\OS$-module of rank two then $\Sym^3 V \tensor (\wedge^2 V)^{\tensor -2} \isom \Sym^3 V^* \tensor (\wedge^2 V^*)^{\tensor -1}$.
\end{corollary}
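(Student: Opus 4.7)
The plan is to derive the isomorphism as a formal consequence of the two preceding lemmas, with no extra work beyond bookkeeping of line bundles.

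First, I would apply Lemma~\ref{L:rk2} directly to obtain the isomorphism $V \tensor \wedge^2 V^* \isom V^*$ of locally free rank-2 $\OS$-modules. Taking third symmetric powers of both sides gives
\[
\Sym^3(V \tensor \wedge^2 V^*) \isom \Sym^3 V^*.
\]

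Next, I would rewrite the left-hand side using Lemma~\ref{P:symline} with the rank-1 line bundle $L = \wedge^2 V^*$ and exponent $k=3$, which yields
\[
\Sym^3(V \tensor \wedge^2 V^*) \isom \Sym^3 V \tensor (\wedge^2 V^*)^{\tensor 3}.
\]
Combining these two displayed isomorphisms gives
\[
\Sym^3 V^* \isom \Sym^3 V \tensor (\wedge^2 V^*)^{\tensor 3}.
\]

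Finally, since $\wedge^2 V$ is locally free of rank one, its class in the Picard group is invertible and we may tensor both sides by $(\wedge^2 V^*)^{\tensor -1} \isom \wedge^2 V$. Doing so on the right produces $\Sym^3 V \tensor (\wedge^2 V)^{\tensor -2}$ (using $(\wedge^2 V^*)^{\tensor 3} \tensor (\wedge^2 V^*)^{\tensor -1} = (\wedge^2 V^*)^{\tensor 2} = (\wedge^2 V)^{\tensor -2}$), and on the left produces $\Sym^3 V^* \tensor (\wedge^2 V^*)^{\tensor -1}$, yielding exactly the claimed isomorphism. There is no real obstacle here: once the two lemmas are in place, the only content is tracking the exponent of the line bundle $\wedge^2 V$, which behaves multiplicatively because it is invertible in the locally free rank-1 case.
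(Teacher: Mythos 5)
Your proposal is correct and is exactly the argument the paper intends: the paper gives no details beyond the remark that the corollary follows by combining Lemma~\ref{P:symline} and Lemma~\ref{L:rk2}, and your chain (apply Lemma~\ref{L:rk2}, take $\Sym^3$, expand via Lemma~\ref{P:symline} with $L=\wedge^2 V^*$, then cancel one copy of the invertible sheaf $\wedge^2 V^*$) is the natural way to carry that out. The bookkeeping, including the identification $(\wedge^2 V^*)^{\tensor 2}\isom(\wedge^2 V)^{\tensor -2}$, is accurate.
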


\begin{lemma}\label{dualsyms}
If $V$ is a locally free $\OS$-module, we have $(\Sym_n V)^* \cong \Sym^n V^*$.
\end{lemma}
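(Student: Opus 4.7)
The plan is to construct an explicit natural $\OS$-module map $\Phi: \Sym^n V^* \to (\Sym_n V)^*$ and then to show it is an isomorphism by checking locally on an open cover on which $V$ is free.

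First I would define $\Phi$. For locally free $V$, the canonical map $(V^*)^{\otimes n} \to (V^{\otimes n})^*$ sending $f_1 \otimes \cdots \otimes f_n$ to the functional $v_1 \otimes \cdots \otimes v_n \mapsto f_1(v_1)\cdots f_n(v_n)$ is an isomorphism; this is standard and is checked on the open cover where $V$ is free by exhibiting dual bases of tensor products. I then restrict such a functional to the submodule $\Sym_n V \subseteq V^{\otimes n}$. For any symmetric $x \in \Sym_n V$ and any $\sigma \in S_n$ one has $(f_1 \otimes \cdots \otimes f_n)(x) = (f_{\sigma(1)} \otimes \cdots \otimes f_{\sigma(n)})(x)$, using that $x$ is invariant under permutation of tensor factors. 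Hence the map factors through the quotient $(V^*)^{\otimes n} \twoheadrightarrow \Sym^n V^*$ to give a well-defined natural map
$$
\Phi: \Sym^n V^* \lra (\Sym_n V)^*, \qquad f_1 \cdots f_n \mapsto \bigl(x \mapsto (f_1 \otimes \cdots \otimes f_n)(x)\bigr).
$$

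To show $\Phi$ is an isomorphism, I would work locally where $V$ is free with basis $e_1, \dots, e_m$ and dual basis $e_1^*, \dots, e_m^*$. A basis for $\Sym^n V^*$ is given by the monomials $e_I^* := e_{i_1}^* \cdots e_{i_n}^*$ indexed by weakly increasing multi-indices $I = (i_1 \leq \cdots \leq i_n)$. On the other side, $\Sym_n V$ is the module of $S_n$-invariants for the permutation action on simple tensors; since $S_n$ permutes the basis $\{e_{j_1} \otimes \cdots \otimes e_{j_n}\}$ of $V^{\otimes n}$, the invariants have a basis consisting of orbit sums $s_I = \sum_\tau e_{i_{\tau(1)}} \otimes \cdots \otimes e_{i_{\tau(n)}}$, where $\tau$ ranges over distinct permutations, indexed again by weakly increasing $I$. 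A direct one-line computation gives $\Phi(e_I^*)(s_J) = \delta_{IJ}$, so $\{e_I^*\}$ and $\{s_J\}$ are dual bases, and $\Phi$ is an isomorphism locally, hence globally by naturality.

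I do not expect serious obstacles. The only delicate point is that the argument must go through without assuming $n!$ is invertible on $S$, and this is why one must take the orbit sums $s_I$ (rather than symmetrizations scaled by $1/n!$) as the basis of $\Sym_n V$ on a free module; these orbit sums then pair correctly with the monomial basis of $\Sym^n V^*$. Everything else is a routine bookkeeping of bases on free modules together with the standard isomorphism $(V^*)^{\otimes n} \cong (V^{\otimes n})^*$ for locally free $V$.
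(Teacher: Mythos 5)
Your proof is correct and follows essentially the same route as the paper: both define the natural map $\Sym^n V^* \to (\Sym_n V)^*$ by restricting the evaluation pairing, note that symmetry of the elements of $\Sym_n V$ makes it well defined, and then check the isomorphism locally where $V$ is free. The paper simply asserts the local isomorphism, whereas you supply the dual-basis computation (monomials pairing with orbit sums), which is exactly the right way to make that step explicit without assuming $n!$ invertible.
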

\begin{proof}
We give a map from $\Sym^n V^*$ to $(\Sym_n V)^*$ as follows
$$
\v_1\v_2\cdots \v_n \mapsto (v_1\otimes \cdots \otimes v_n \mapsto \v_1(v_1)\v_2(v_2)\cdots \v_n(v_n) ).
$$
If we permute the $\v_i$ factors, we see the result does not change because the elements
of $\Sym_n V$ that we evaluate on are invariant with respect to this permutation.  When $V$ is free,
we can explicitly see that this map is an isomorphism.  
\end{proof}

\subsection{Degree $k$ maps}\label{SS:degkmaps}

Let $M$ and $N$ be locally free $\OS$-modules.
A linear map from $M$ to $R$ is equivalent to a global section of $M^*$.  In other words, sections of $M^*$ are the degree 1 functions on $M$.
We define the \emph{degree $n$} functions on $M$ as the global sections of $\Sym^n M^*$, symmetric polynomials in linear functions on $M$.

\begin{definition}
A \emph{degree $n$} map from $M$ to $N$ is a global section of $$\Sym^n M^* \tensor N \isom \mathcal{H}om(\Sym_n M, N).$$  
\end{definition}
Note that the identity map on $\Sym_n M$ gives a canonical degree $n$ map from $M$ to $\Sym_n M$.  

The language ``degree $n$ map from $M$ to $N$'' suggests that we should be able to evaluate such a thing on elements of $M$.

\begin{definition}
Given a degree $n$ map from $M$ to $N$ as an element $f\in\Hom(\Sym_n M, N)$,  the \emph{evaluation} of $f$
on an element of $M$ is $f(m\tensor \cdots \tensor m)$.
\end{definition}

When $M$ is free, say with generators $m_1,\dots,m_k$ and dual basis $\m_1,\dots\m_k$ of $M^*$, then we defined a degree $n$
map $f$ from $M$ to $R$ to be a homogeneous polynomial of degree $n$ in the $\m_1,\dots\m_k$.  If we evaluate
$f$ on $(c_1m_1 +\dots+ c_km_k)$ for arbitrary sections $c_i$ of $\OS$, we will have a degree $n$ polynomial in the $c_i$.
Replacing the $c_i$ in this polynomial by $\m_i$ we obtain the homogeneous polynomial of degree $n$ in the $\m_1,\dots\m_k$ 
which is the realization of $f$ as an element of $\Sym^n M^*$.

When $M$ is free, we may have a non-linear map $\rho :M\ra \OS$ (or $\rho:M\ra N$, but we take $N=\OS$ for simplicity)
and wish to realize it as the evaluation of a degree $n$ map.  We can consider $\rho(c_1m_1 +\dots+ c_km_k)$ for
arbitrary $c_i\in R$ and if $\rho(c_1m_1 +\dots+ c_km_k)$ is a degree $n$ polynomial in the $c_i$, we have
an
$f\in \Sym^n M^*$ (given by replacing the $c_i$ by $\m_i$) of which $\rho$ is the evaluation).

Since $M$ is locally free, we locally have $f\in \Sym^n M^*$ and see that
the above recipe is invariant under change of basis and so we have a global $f\in \Sym^n M^*$
(as long as everywhere locally where $M$ is free $\rho(c_1m_1 +\dots+ c_km_k)$ is a degree $n$ polynomial in the $c_i$).

As an example, we explicitly realize the determinant as a distinguished element of 
$$\Hom(\Sym_n \Hom(M,N), \Hom(\wn M,\wn N)).$$
Let $\phi_1\tensor\cdots\tensor \phi_n \in  \Hom(M,N)^{\tensor n}$.
Then we can map $\phi_1\tensor\cdots\tensor \phi_n$ to the element of $\Hom(\wn M,\wn N)$  which sends
$m_1 \wedge \cdots \wedge m_n$ to $ \phi_1(m_{1}) \wedge \cdots \wedge \phi_n(m_{n}).$
This will not be well-defined for $\phi_1\tensor\cdots\tensor \phi_n \in  \Hom(M,N)^{\tensor n}$, but
it will be well-defined when restricted to $\Sym_n \Hom(M,N)$.
\begin{align}\label{E:detdef}
\Sym_n \Hom(M,N)&& \lra&& \Hom(\wn M,\wn N)&\\
\phi_1\tensor\cdots\tensor \phi_n&& \mapsto&&  (m_1 \wedge \cdots \wedge m_n \mapsto &\phi_1(m_{1}) \wedge \cdots \wedge \phi_n(m_{n}) )
\notag
\end{align}
This is our realization of the determinant function (as opposed to the determinant of a specific
homomorphism)
as an element of $\Hom(\Sym_n \Hom(M,N), \Hom(\wn M,\wn N)).$
When we evaluate the determinant on a map $\phi \in\Hom(M,N)$, we have $\phi(m_{1}) \wedge \cdots \wedge \phi(m_{n}).$
For example, let $N$ and $M$ be free of rank 2.  Evaluating
our degree $2$ determinant map on a generic element of $\Hom(M,N)$ that sends $m_1$ to $an_1 +cn_2$ and $m_2$ to $bn_1+dn_2$,
we see that we obtain the element of $\Hom(\wedge^2 M, \wedge^2 N)$ that sends $m_1\wedge m_2$  to $(an_1 +cn_2)\wedge(bn_1+dn_2) =(ad-bc)n_1\wedge n_2$.

\subsection{Degree $k$ maps with coefficients}\label{SS:degkcoeff}

Recall that we have defined a degree $k$ map from a locally free $\OS$-module $M$
to a locally free $\OS$-module $V$ to be a linear map from
$\Sym_k M$ to $V$.  This is equivalent to a global section of $\Sym^k M^* \tensor V$.
We use the following proposition to show that we can ``add coefficients'' to
a degree $k$ map.

\begin{proposition}\label{P:coeffs}
 In the natural map
$$
\Sym_k(M\tesnor N) \ra M^{\tesnor k} \tesnor \Sym^k N,
$$
the image of $\Sym_k(M\tesnor N) $ is inside $\Sym_k M \tesnor \Sym^k N$.
\end{proposition}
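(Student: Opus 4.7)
The plan is to realize $\Sym_k(M\tensor N)$ as the $S_k$-invariants of $(M\tensor N)^{\tensor k}$ under the permutation action, then trace an invariant element through the natural map. The work is essentially bookkeeping between two copies of the $S_k$-action.

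First, I would identify the map in the statement. The shuffle isomorphism $(M\tensor N)^{\tensor k}\arisom M^{\tensor k}\tensor N^{\tensor k}$ intertwines the $S_k$-action permuting tensor factors on the source with the diagonal $S_k$-action on the target. The natural map of the proposition is then the composition of (the restriction to $\Sym_k(M\tensor N)$ of) this shuffle with $\mathrm{id}_{M^{\tensor k}}\tensor q$, where $q\colon N^{\tensor k}\twoheadrightarrow\Sym^k N$ is the symmetric quotient.

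Next, I would carry out the main computation. For $x\in\Sym_k(M\tensor N)$, let $\tilde x$ denote its image in $M^{\tensor k}\tensor N^{\tensor k}$; it is invariant under the diagonal action. Writing $\sigma_M$ for $\sigma\in S_k$ acting on the $M$-factors alone and $\sigma_N$ similarly, the quotient $q$ satisfies $q\circ\sigma_N=q$. Hence
$$
\sigma_M\cdot(\mathrm{id}\tensor q)(\tilde x)=(\sigma_M\tensor q)(\tilde x)=(\mathrm{id}\tensor q)\bigl((\sigma_M\tensor\sigma_N)(\tilde x)\bigr)=(\mathrm{id}\tensor q)(\tilde x),
$$
the final equality by diagonal invariance of $\tilde x$. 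Thus $(\mathrm{id}\tensor q)(\tilde x)$ is invariant under $S_k$ acting on the first factor alone, so it lies in $(M^{\tensor k})^{S_k}\tensor \Sym^k N$.

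Finally, I would identify $(M^{\tensor k})^{S_k}$ with $\Sym_k M$, which is the defining property of $\Sym_k$. The main technical subtlety I expect is the identification $(M^{\tensor k}\tensor \Sym^k N)^{S_k}=(M^{\tensor k})^{S_k}\tensor \Sym^k N$: over a base where $k!$ is not invertible, commuting invariants past tensor products can fail in general, but here $\Sym^k N$ is locally free, hence flat, and the identification goes through. As a safety net, since $M$ and $N$ are locally free the whole statement can be checked on a trivializing open cover, reducing to the case where both modules are free, where the inclusion can be verified directly on a basis $\{m_i\tensor n_j\}$ of $M\tensor N$ and sidesteps any characteristic sensitivity.
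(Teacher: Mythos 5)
Your proof is correct, but it takes a genuinely different route from the paper's. The paper localizes to a cover trivializing $M$ and $N$ and checks the claim on a spanning set: for free modules, $\Sym_k(M\otimes N)$ (the $S_k$-invariants of a module with permuted basis) is spanned by orbit sums of pure basis tensors $(m_{i_1}\otimes n_{j_1})\otimes\cdots\otimes(m_{i_k}\otimes n_{j_k})$, and for such an orbit sum every term has the same image $n_{j_1}\cdots n_{j_k}$ in $\Sym^k N$, so the image visibly factors as a symmetric tensor in $M^{\otimes k}$ times that monomial. You instead run a global, coordinate-free invariance argument: the two commuting $S_k$-actions, the identity $q\circ\sigma_N=q$, and diagonal invariance of $\tilde x$ show the image is fixed by the $M$-side action, and then you commute invariants past $-\otimes\Sym^k N$. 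That last step is the one place needing care, and you handle it correctly: since $S_k$ is finite, $(M^{\otimes k})^{S_k}$ is the kernel of a map into a finite product, and flatness (indeed local freeness) of $\Sym^k N$ preserves that kernel after tensoring, giving $(M^{\otimes k}\otimes\Sym^k N)^{S_k}=(M^{\otimes k})^{S_k}\otimes\Sym^k N=\Sym_k M\otimes\Sym^k N$. What your approach buys is basis-freedom and an explicit accounting of the hypotheses used (flatness of $\Sym^k N$, finiteness of $S_k$), with no characteristic restrictions; what the paper's buys is brevity and an explicit set of generators of $\Sym_k(M\otimes N)$ in the free case --- which is exactly the computation you relegate to your ``safety net,'' so that fallback coincides with the paper's actual proof.
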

\begin{proof}
We prove this proposition by checking the statement locally
where the modules are free.
If we symmetrize a pure tensor of basis elements in $(M\tesnor N)^{\tensor k}$, we see that
when we forget the terms from $N$ we still obtain an element of $\Sym_k M$.  Since
all of the terms in the symmetrization will have the same factor in $\Sym^k N$, this completes the proof.
\end{proof}

Thus, given a degree $k$ map from $M$ to $V$, we naturally obtain a degree $k$
map from $M\tesnor N$ to $V\tensor \Sym^k N$ (by composing
$\Sym_k (M\tesnor N) \ra \Sym_k M \tesnor \Sym^k N\ra V\tesnor \Sym^k N$).  
We call this construction \emph{using $V$ as coefficients}, because it is
as if we treat the elements of $V$ as formal ring elements.

\section{Appendix: Inherited algebra structure}\label{Appendix2}

Let $X$ be a scheme.  
An algebra structure on a chain complex $C$ of $\mathcal{O}_X$-modules is given by a map $C\tensor C \ra C$, which we call the multiplication.  (See \cite[2.7.1]{Weibel} for the definition of the tensor product of
two chain complexes.)  Associativity and commutativity are given by the commutativity of the expected diagrams built out of the multiplication map.  A unit for the algebra is given by a map $\mathcal{O}_X\ra C$ that satisfies the expected properties with respect to the multiplication.

If $\pi : X\ra Y$ is a morphism of schemes, such an algebra structure on $C$ is inherited by $R\pi_* C$ in the derived category of $Y$.  To be more precise, we let $Q$ be the localization functor that maps
complexes of $\mathcal{O}_X$-modules to the associated objects in the derived category of $X$.  
From the universal property of the derived tensor (see, e.g. \cite[10.5.1]{Weibel}), we have a morphism 
\begin{equation}\label{E:tensorQ}
 Q(C) \tensor Q(C) \ra Q(C\tensor C),
\end{equation}
where the $\tensor$ of the left denotes the total tensor in the derived category (see \cite[10.6]{Weibel} or \cite[II.4]{RD}).
From Equation~\eqref{E:tensorQ} composed with $Q(C\tensor C)\ra Q(C)$ from the multiplication map, we see that the algebra structure on $C$
is inherited by $Q(C)$ in the dervied category of $X$.

Next we see there is a map
\begin{equation}
 R\pi_* Q(C) \tensor R\pi_* Q(C) \ra R\pi_* (Q(C)\tensor Q(C))
\end{equation}
which can be obtained from the morphism $R\pi_* Q(C) \tensor R\pi_* Q(C) \ra R\pi_* (Q(C)\tensor L\pi^* R\pi_*Q(C))$ of the projection formula
(see \cite[10.8.1]{Weibel} \cite[II.5.6]{RD}) and the morphism $L\pi^* R\pi_*Q(C) \ra Q(C)$ that comes from the adjointness of $L\pi^*$ and $R\pi_*$ and the identity map
$R\pi_* \ra R\pi_*$ (c.f. \cite[10.7.1]{Weibel} \cite[II.5.10, II.5.11]{RD}).  Thus the algebra structure is inherited by $R\pi_* Q(C)$.  Finally, the natural map
\begin{equation}
 H^0( R\pi_* Q(C)) \tensor H^0(R\pi_* Q(C)) \ra H^0( R\pi_* Q(C) \tensor R\pi_* Q(C))
\end{equation}
shows how the algebra structure is inherited by $H^0( R\pi_* Q(C))$.  One can follow the diagrams to see that the associativity and commutativity of the multiplication map is inherited, along with the structure of the unit.

\section*{Acknowledgements}
The author would like to thank Manjul Bhargava for asking the questions that inspired this research, guidance along the way, and helpful feedback both on the ideas
and the exposition in this paper.  She would also like to thank Lenny Taelman for suggestions for improvements to the paper.  This work was done as part of the author's Ph.D. thesis at Princeton University, and during the work she was supported by an NSF Graduate Fellowship, an NDSEG Fellowship, an AAUW Dissertation Fellowship, and a Josephine De K\'{a}rm\'{a}n Fellowship.  This paper was prepared for submission while the author was supported by an American Institute of Mathematics Five-Year Fellowship.  The author would also like to thank the referee for making many suggestions that improved the paper.

\end{document}